\documentclass[a4paper,12pt]{scrartcl}
\usepackage{amsmath, amsfonts, amsthm, amssymb, graphicx, color, hyperref}
\usepackage[right]{eurosym}
\usepackage{tikz}
\usepackage{url}

\newcommand{\cA}{\mathcal{A}}

\newcommand{\cF}{\mathcal{F}}

\newcommand{\cM}{\mathcal{M}}

\newcommand{\IN}{\mathbb{N}}

\newcommand{\IR}{\mathbb{R}}

\newcommand{\R}{\mathbb{R}}

\newcommand{\eps}{\epsilon}

\newcommand{\be}{\begin{eqnarray*}}
\newcommand{\ee}{\end{eqnarray*}}
\newcommand{\ben}{\begin{eqnarray}}
\newcommand{\een}{\end{eqnarray}}

\newtheorem{theo}{Theorem}[section]

\newtheorem{lemma}[theo]{Lemma}
\newtheorem{propo}[theo]{Proposition}
\newtheorem{corollary}[theo]{Corollary}
\newtheorem{ex}[theo]{Example}
\theoremstyle{definition}
\newtheorem{defi}[theo]{Definition}

\newtheorem{remark}[theo]{Remark}

\title{BSDEs with singular terminal condition and control problems with constraints}

\author{Stefan Ankirchner \and Monique Jeanblanc
\and Thomas Kruse \thanks{Ankirchner,
Institute for Applied Mathematics, University of Bonn, Endenicher Allee 60, 53113 Bonn,
Germany. \emph{Email:} \url{ankirchner@hcm.uni-bonn.de}, \emph{Phone:} +49 (0)228 73 3796;
Jeanblanc, Universit\'e d'Evry Val D'Essonne, 23 Boulevard de France, 91025 Evry Cedex, France.
\emph{Email:} \url{monique.jeanblanc@univ-evry.fr}, \emph{Phone:} +33 (0) 1 64 85 34 88/79;
 Kruse, Institute for Applied Mathematics, University of Bonn, Endenicher Allee 60, 53113 Bonn,
  Germany. \emph{Email:} \url{tkruse@uni-bonn.de}, \emph{Phone:} +49 (0)228 73 62 273.
  We are grateful to Xin Guo, Shige Peng and Philipp Strack for helpful comments. Financial support by the German
  Research Foundation (DFG), through the \textit{Hausdorff Center for Mathematics}, and the French Banking federation, through the {\it Chaire Risque de Cr\'edit},  are gratefully
  acknowledged. }}

\begin{document}

\maketitle

\begin{abstract}
We provide a probabilistic solution of a not necessarily Markovian
control problem with a state constraint by means of a Backward
Stochastic Differential Equation (BSDE). The novelty of our
solution approach is that the BSDE possesses a {\it singular
terminal condition}. We prove that a solution of the BSDE
exists, thus partly generalizing existence results obtained
by Popier in \cite{popier2006} and \cite{popier2007}. We perform a
verification and discuss special cases for which the control
problem has explicit solutions.
\end{abstract}
\begin{center}\footnotesize
  \begin{tabular}{r@{ : }p{10cm}}
      {\it 2010 MSC} & 60H10, 91G80, 93E20.\\
      {\it Keywords} &  Stochastic control with constraints, Backward Stochastic differential equations, Maximum principle, Optimal liquidation.
    \end{tabular}
  \end{center}
\section*{Introduction}

In these notes we provide a pure probabilistic solution of the control problem that consists in minimizing the functional
\ben\label{introProb}
J(x)=E\left[\int_0^T (\eta_t\lvert \dot x_t \rvert^p +\gamma_t|x_t|^p) dt\right]
\een
over all absolutely continuous paths $(x_t)_{t \in [0,T]}$ starting in $\xi \in \R$ and
ending in $0$ at time $T$. Here $p>1$ and $(\eta, \gamma)$ are two non-negative stochastic processes that are progressively measurable with respect to the natural filtration $(\cF_t)$ generated by a Brownian motion. We choose the control strategies $x$ to be adapted to $(\cF_t)$.

Such a control problem arises for example when economic agents
have to close a position of $\xi$ asset shares in a market with a stochastic price impact (see e.g.\ \cite{ak11} and the references therein). The first term $\int_0^T \eta_t\lvert
\dot x_t \rvert^p dt$ in \eqref{introProb} can be interpreted
as the liquidity costs entailed by closing the position, where
$\eta$ is a stochastic price impact factor. The second term can be
seen as a measure of the risk associated to the open position.


\bigskip
Our method for solving the control problem \eqref{introProb} draws on the notion of backward stochastic differential equations (BSDEs). BSDEs have turned out to be a powerful tool for analyzing stochastic control problems, and for providing pure probabilistic solutions. We refer to the survey article \cite{epq97} and the book by Pham \cite{pham} for examples of control problems solved with BSDEs.
The control problem \eqref{introProb} considered here imposes a constraint on the terminal value of the control process $x$, namely $x_T = 0$. In the following we characterize its solution with the BSDE
\ben\label{introBSDE}
dY_t=\left((p-1)\frac {Y_t^q}{\eta_t^{q-1}}-\gamma_t\right)
d t+Z_t  d W_t
\een
(where $q = 1/(1 - \frac1p)$) possessing the {\it singular} terminal condition
\ben\label{term condi}
\lim_{t\nearrow T}Y_t=\infty.
\een
We show that if $\eta$ and $\gamma$ satisfy some nice integrability condition, then there exists a minimal solution $(Y,Z)$ of the BSDE \eqref{introBSDE} with terminal condition \eqref{term condi}. We subsequently prove, without any further assumptions, that there exists an optimal control of the problem \eqref{introProb} and that it is given by $x^*_t = \xi e^{-\int_0^t \left(\frac{Y_s}{\eta_s}\right)^{q-1} ds}$. Note that the terminal condition \eqref{term condi} is necessary for the constraint $x^*_T = 0 $ to be satisfied.

One can also derive the singularity \eqref{term condi} by considering the value function associated to the control problem: as $t$ converges to $T$, the value function converges to infinity, provided the position $x \not= 0$. We will show that the value function is a power function of the position variable, multiplied with the solution of the BSDE \eqref{introBSDE}. The singularity of the value function translates into the BSDE's singularity at the terminal condition.

BSDEs with singular terminal conditions have so far been studied only in Popier \cite{popier2006} and \cite{popier2007}. One of the present paper's goal is to reveal their power for
solving the stochastic control problem \eqref{introProb}. BSDEs with singular terminal conditions have not been detected as an efficient tool for solving stochastic control problems yet.

\bigskip
The control problem \eqref{introProb}, more precisely some versions of it, have been already studied in the literature. In \cite{schied2012} a similar class of control problems is solved by means of so-called superprocesses. The functional of the control problem considered in \cite{schied2012} is slightly more general, but the pair $(\eta, \gamma)$ is assumed to be Markovian. The BSDE approach we present here is {\it not} bound to a Markovian model set-up.

Ji and Zhou  \cite{jixyz} consider a very general control problem with terminal state constraints. They assume that the state process is disturbed by some white noise with a volatility that is invertible in the control. Notice that in our setting the state process $x$ is not disturbed.

In \cite{ak11} the authors consider the special case of the control problem \eqref{introProb} where $p =2$, $\eta$ is a constant and $\gamma$ is a function of a homogeneous Brownian martingale (in particular $\gamma$ is a Markov process). They solve the control problem with analytical techniques, characterizing the optimal control and the value function with a solution of a PDE in the viscosity sense.

A probabilistic solution of a related control problem is given in \cite{horstnau} (and the preceding paper \cite{naujokat2011curve}), also by means of BSDEs: the authors consider the problem of how to optimally follow a trading target in an illiquid market with a non-temporary price impact depending on order sizes. Optimal controls, however, are {\it singular} and are verified with BSDEs that have non-singular terminal conditions.

\bigskip
The paper is organized as follows. In Section \ref{probform} we precisely describe the modeling set-up and present the main results. Moreover, we give a heuristic derivation of why the BSDE \eqref{introBSDE} with singular terminal condition provides a solution of the control problem.

In Section \ref{sing_BSDEs} we prove, given some nice integrability conditions on $\eta$ and $\gamma$, that there exists a solution of the BSDE \eqref{introBSDE}.

Section \ref{verification} turns to a verification: we show that the optimal control and value function can indeed be characterized by the BSDE solution constructed in Section \ref{sing_BSDEs}.

Finally, in Section \ref{ind_incr_section} we study in detail the special case where $\gamma$ is zero and $\eta$ has uncorrelated multiplicative increments. We show that in this case the optimal control is deterministic.

\section{Main results}\label{probform}
We fix a deterministic, finite time horizon $T>0$ and a
probability space $(\Omega,\mathcal F,P)$ which supports a
$d$-dimensional Brownian motion $(W_t)_{0\le t\le T}$, where $d
\in \IN$. Let $(\mathcal F_t)_{t\in[0,T]}$ denote the completed
filtration generated $(W_t)_{0\le t\le T}$. Throughout we assume
that $(\eta_t)_{t \in [0,T]}$ and $(\gamma_t)_{t \in [0,T]}$ are
nonnegative, progressively measurable stochastic processes.
We assume $p>1$ and denote by $q = 1/(1 - \frac1p)$ its H\"older
conjugate. We consider the stochastic control problem to minimize
the functional \ben\label{problem} J(x)=E\left[\int_0^T
(\eta_t\lvert \dot x_t \rvert^p +\gamma_t|x_t|^p) dt\right] \een
over all progressively measurable processes
$x:\Omega\times[0,T]\to \IR$ that possess absolutely continuous
sample paths and satisfy the constraints $x_0=\xi \in \R$ and
$x_T=0$ a.s. We denote the set of all these controls by $\mathcal
A_0$, and define \ben\label{min_problem} v=\inf_{x\in \mathcal
A_0}J(x). \een We show that under some nice integrability
conditions on $\eta$ and $\gamma$ there exists an optimal control
$x^* \in \cA_0$; i.e. $J(x^*) = v$. Moreover we characterize the
optimal control by means of a BSDEs with a singular terminal
condition. We define the notion of a solution in the style of
\cite{popier2006}.
\begin{defi}\label{def_sing_BSDE}
 We say that a pair of progressively measurable processes $(Y,Z)$ with values in $\R \times \R^d$ solves the BSDE \eqref{introBSDE} with singular terminal condition $Y_T=\infty$ if it satisfies
\begin{itemize}
 \item [(i)] for all $0\le s\le t<T$: $Y_s=Y_t-\int_s^t \left((p-1)\frac {Y^q_r}{\eta_r^{q-1}}-
 \gamma _r \right) dr -\int_s^tZ_rdW_r$;
\item [(ii)] for all $0\le t<T$: $E\left[\sup_{0\le s \le t}|Y_s|^2+\int_0^t|Z_r|^2dr\right]<\infty$;
\item [(iii)] $\liminf_{t\nearrow T}Y_t=\infty$, a.s.
\end{itemize}
\end{defi}
We introduce the following spaces of processes. For $i=1,2$ and $t \le T$ let
\be
\mathcal M^i(0,t)=L^i(\Omega\times[0,t],\mathcal P,P\otimes \lambda)
\ee
where $\lambda$ is the Lebesgue measure and $\mathcal P$ denotes the $\sigma$-algebra of $(\mathcal F_t)$-progressively measurable subsets of $\Omega\times [0,T]$. Throughout we assume that $\eta$ and $\gamma$ satisfy the integrability conditions
\be
{\bf (I1) } & \phantom{bla bla bla } & \eta \in\mathcal M^2(0,T) \text{ and } 1/\eta^{q-1}\in \mathcal M^1(0,T), \\
{\bf (I2) } & & E\int_0^T (T-s)^p \gamma_s ds < \infty \text { and } \gamma \in \mathcal M^2(0,t) \text{ for all } t<T.
\ee
In our first main result we prove existence of a minimal solution of the BSDE \eqref{introBSDE}.
\begin{theo}\label{first main}
Assume that Conditions $\bf{(I1)}$ and $\bf{(I2)}$ are satisfied. Then there exists a minimal solution  $(Y,Z)$ of the BSDE \eqref{introBSDE} with singular terminal condition $Y_T=\infty$.
\end{theo}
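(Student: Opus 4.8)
The plan is to construct the minimal solution as an increasing limit of solutions to BSDEs with finite terminal conditions, following the penalization philosophy familiar from PDEs with blow-up boundary data. For each $n \in \IN$ I would first solve the approximating problem
\[
Y^n_s = n - \int_s^T \Big( (p-1)\frac{(Y^n_r)^q}{\eta_r^{q-1}} - \gamma_r \Big) dr - \int_s^T Z^n_r\, dW_r, \qquad s \le T,
\]
with the finite terminal value $Y^n_T = n$. Since the generator $f(r,y) = (p-1)(y^+)^q/\eta_r^{q-1} - \gamma_r$ is only locally Lipschitz in $y$ because of the superlinear term $y^q$ (recall $q>1$), existence for the approximating equation is not immediate; I would obtain it either by a further truncation of $y^q$ at level $k$, solving the resulting Lipschitz BSDEs and letting $k\to\infty$, or by invoking existence theory for generators that are monotone and of polynomial growth. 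Nonnegativity of $\gamma$ and of the terminal datum guarantees $Y^n \ge 0$.

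Next I would establish the two structural facts on which everything rests: monotonicity and a uniform a priori bound. By the comparison theorem the sequence $(Y^n)$ is nondecreasing in $n$. For the bound I would compare $Y^n$ with the supersolution coming from the deterministic blow-up profile: dropping $\gamma \ge 0$ (which only lowers the generator) and integrating the separable ODE $\dot y = (p-1)y^q/\eta^{q-1}$ backward from $+\infty$ produces the barrier $\big(\int_t^T \eta_s^{-(q-1)}\,ds\big)^{-(p-1)}$, so one expects
\[
0 \le Y^n_t \le \Big( E\Big[ \int_t^T \eta_s^{-(q-1)}\, ds \,\Big|\, \cF_t\Big]\Big)^{-(p-1)} + (\text{a term controlled by }\gamma),
\]
uniformly in $n$. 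Condition $\mathbf{(I1)}$ makes this bound finite and strictly positive on $[0,T)$, while the weight $(T-s)^p$ in $\mathbf{(I2)}$ is exactly tuned so that the $\gamma$-contribution is of lower order than the blow-up rate near $T$. This $n$-uniform estimate is the crux, and I expect it to be the main obstacle: it must simultaneously be finite on $[0,T)$ and carry the correct singular rate as $t\nearrow T$.

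I would then define $Y_t = \lim_{n} Y^n_t = \sup_n Y^n_t$, finite for every $t<T$ by the previous step. To recover $Z$ and verify (i) and (ii) of Definition \ref{def_sing_BSDE}, I would fix $t<T$ and deduce, from the uniform bound on $Y^n$ over $[0,t]$ together with It\^o's formula applied to $|Y^n|^2$, uniform $\cM^2(0,t)$ estimates for $(Z^n)$; a stability argument then yields $Z^n \to Z$ in $\cM^2(0,t)$ and shows that $(Y,Z)$ solves the equation on every subinterval $[s,t] \subset [0,T)$, giving (i), while the bounds give (ii). The singular terminal condition (iii) requires a matching lower bound: using $\mathbf{(I2)}$ to dominate $E\int_t^T \gamma$ against the blow-up rate, I would bound $Y$ from below by a subsolution that still tends to $+\infty$, whence $\liminf_{t\nearrow T} Y_t = \infty$ a.s.

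Finally, for minimality let $(\tilde Y, \tilde Z)$ be any solution in the sense of Definition \ref{def_sing_BSDE}. Linearizing the difference $\tilde Y - Y^n$ and using that the incremental slope $\beta_r = (p-1)\eta_r^{-(q-1)}(\tilde Y_r^q - (Y^n_r)^q)/(\tilde Y_r - Y^n_r)$ is nonnegative, I would represent $\tilde Y_s - Y^n_s$ through a nonnegative discount factor and let $s\nearrow T$; since $\tilde Y_s \to \infty$ while $Y^n_s \to n$, Fatou's lemma gives $\tilde Y \ge Y^n$ for every $n$, hence $\tilde Y \ge Y$, so $Y$ is indeed minimal. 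Once the $n$-uniform a priori bound of the second step is secured, the passage to the limit, the recovery of $Z$, and this minimality argument all follow comparatively standard routes.
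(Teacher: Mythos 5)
Your overall architecture --- approximate by BSDEs with finite terminal data, use the comparison theorem to get a nondecreasing family, pass to the increasing limit under an $n$-uniform bound, and prove minimality by linearizing the difference and applying Fatou --- is exactly the paper's. However, two of your concrete steps do not work as stated. First, you keep the full $\gamma$ in the approximating equation. Condition $\mathbf{(I2)}$ only requires $E\int_0^T(T-s)^p\gamma_s\,ds<\infty$ and $\gamma\in\cM^2(0,t)$ for $t<T$; it allows $\int_0^T\gamma_s\,ds=\infty$ (take $\gamma_s=(T-s)^{-p-1/2}$). In that case the approximating BSDE with terminal value $n$ and source $\gamma$ has no finite solution: a finite $Y^n$ would force the conditional expectation of $\int_s^T\gamma_r\,dr$ to be compensated by the nonlinear term, which is impossible with $Y^n_T=n$. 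The paper truncates the source to $\gamma\wedge L$ (and regularizes $\eta$ to $\eta\vee\delta$ so that the generator is bounded in $\omega$ and the monotone-driver existence theorem of Pardoux applies); the truncation is removed only in the final limit $L\to\infty$, where the weighted integrability in $\mathbf{(I2)}$ controls the error.

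Second, and more seriously, your $n$-uniform a priori bound is on the wrong side. The barrier $\bigl(E[\int_t^T\eta_s^{-(q-1)}ds\mid\cF_t]\bigr)^{-(p-1)}$ coming from the separable ODE is a \emph{lower} bound, not an upper bound: since $x\mapsto x^{-(q-1)}$ is convex, Jensen's inequality goes the wrong way, and already in the uncorrelated-multiplicative-increments case of Section \ref{ind_incr_section} the explicit minimal solution $(\int_t^T E[\eta_s\mid\cF_t]^{-(q-1)}ds)^{-(p-1)}$ dominates your candidate. The paper proves precisely this quantity as the lower bound (via a comparison argument involving a quadratic BSDE), and it is what yields $\liminf_{t\nearrow T}Y_t=\infty$. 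The genuine uniform upper bound --- the crux you correctly identify --- is obtained by comparing with the \emph{linear} BSDE with driver $-py/(T-t)+\eta_t/(T-t)^p+\gamma_t$, using the algebraic inequality $(p-1)y^q-pa^{q-1}y+a^q\ge0$ with $a=\eta_t(T-t)^{-p/q}$; it reads $Y^L_t\le (T-t)^{-p}\,E[\int_t^T(\eta_s+(T-s)^p\gamma_s)\,ds\mid\cF_t]$ and is exactly where $\eta\in\cM^2(0,T)$ and the weight $(T-s)^p$ in $\mathbf{(I2)}$ enter. Without this step the finiteness of the limit $Y_t$ for $t<T$, and hence the whole construction, does not go through. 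The remaining steps (Cauchy estimates for $Z^L$ in $\cM^2(0,t)$, verification of (i)--(iii), and the Fatou argument for minimality) match the paper and are fine.
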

In the second main result we characterize the value function and the optimal control in terms of the minimal solution. \begin{theo}\label{sec main}
Suppose Conditions $\bf{(I1)}$ and $\bf{(I2)}$, and let $(Y,Z)$ be the minimal solution of \eqref{introBSDE}. Then
\be
v = Y_0 |\xi|^p
\ee
and the optimal control is given by
\be
x^*_t=\exp\left(-\int_0^t\left(\frac{Y_s}{\eta_s}\right)^{q-1} ds\right),
\ee
for all $t \in [0,T]$.
\end{theo}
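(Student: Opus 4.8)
The plan is to reduce the whole statement to a pointwise convexity inequality obtained from an integration-by-parts identity for the process $Y_t|x_t|^p$. Since any admissible $x$ is continuous and of finite variation, the product rule applied to $Y_t|x_t|^p$ (with $d(|x_r|^p)=p|x_r|^{p-1}\sgn(x_r)\dot x_r\,dr$ and no bracket term), together with the dynamics of $Y$ in Definition \ref{def_sing_BSDE}(i), yields for all $0\le t<T$ the relation
\[
E\int_0^t \big(\eta_r|\dot x_r|^p+\gamma_r|x_r|^p\big)\,dr
= Y_0|\xi|^p - E\big[Y_t|x_t|^p\big] + E\int_0^t \phi_r(\dot x_r)\,dr ,
\]
where $\phi_r(u):=\eta_r|u|^p+pY_r|x_r|^{p-1}\sgn(x_r)\,u+(p-1)\frac{Y_r^q}{\eta_r^{q-1}}|x_r|^p$ is the drift of $Y|x|^p$ augmented by the running cost. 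Because $\eta_r>0$ (guaranteed by Condition (I1)) and $p>1$, the map $u\mapsto\phi_r(u)$ is strictly convex; minimizing via Young's inequality gives the minimizer $u^*_r=-(Y_r/\eta_r)^{q-1}x_r$, and using $q-1=1/(p-1)$ together with $(q-1)p=q$ one checks $\min_u\phi_r(u)=\phi_r(u^*_r)=0$. Hence $\phi_r(\dot x_r)\ge 0$ for every control, with equality exactly along $\dot x_r=-(Y_r/\eta_r)^{q-1}x_r$; integrating this linear ODE from $x_0=\xi$ produces precisely the candidate $x^*_t=\xi\exp\!\big(-\int_0^t(Y_s/\eta_s)^{q-1}ds\big)$.

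For the lower bound $v\ge Y_0|\xi|^p$ I would run the identity with the approximating solutions $(Y^n,Z^n)$ having finite terminal value $Y^n_T=n$, whose increasing limit is the minimal solution $Y$ (from the construction underlying Theorem \ref{first main} in Section \ref{sing_BSDEs}). For these the relation holds up to $t=T$; since any $x\in\cA_0$ satisfies $x_T=0$, the terminal term $E[Y^n_T|x_T|^p]$ vanishes, and dropping the nonnegative term $E\int_0^T\phi^n_r(\dot x_r)\,dr$ gives $J(x)\ge Y^n_0|\xi|^p$. Letting $n\to\infty$ with $Y^n_0\uparrow Y_0$ yields $J(x)\ge Y_0|\xi|^p$ for all admissible $x$, hence $v\ge Y_0|\xi|^p$.

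It remains to show $x^*$ is admissible and attains this bound. Progressive measurability and absolute continuity of $x^*$ are immediate, $x^*_0=\xi$, and $|x^*_t|\le|\xi|$; the essential constraint $x^*_T=0$ I would derive from the singularity of $Y$. Applying the relation to $x^*$ (where $\phi_r(\dot x^*_r)\equiv 0$) gives $E[Y_t|x^*_t|^p]\le Y_0|\xi|^p$ for all $t<T$, so by Fatou's lemma $E\big[\liminf_{t\nearrow T}Y_t|x^*_t|^p\big]\le Y_0|\xi|^p<\infty$; since $\liminf_{t\nearrow T}Y_t=\infty$ by Definition \ref{def_sing_BSDE}(iii) and $|x^*_t|$ converges, this forces $x^*_T=0$ a.s., so $x^*\in\cA_0$. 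Finally, for $x^*$ the relation reads $E\int_0^t(\eta_r|\dot x^*_r|^p+\gamma_r|x^*_r|^p)\,dr=Y_0|\xi|^p-E[Y_t|x^*_t|^p]\le Y_0|\xi|^p$, and monotone convergence as $t\nearrow T$ gives $J(x^*)\le Y_0|\xi|^p$; combined with the lower bound, $J(x^*)=v=Y_0|\xi|^p$, so $x^*$ is optimal.

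The main obstacle is integrability: at each step one must justify that the local martingale $\int_0^\cdot|x_r|^pZ_r\,dW_r$ (respectively with $Z^n$) is a true martingale, so that it drops out under expectation. For $x^*$ this is straightforward because $|x^*|\le|\xi|$ is bounded and $E\int_0^t|Z_r|^2dr<\infty$ by Definition \ref{def_sing_BSDE}(ii); for a general admissible $x$ (where one may assume $J(x)<\infty$) I would localize along reducing stopping times, take expectations, and pass to the limit using Fatou's and monotone convergence, the nonnegativity of $\eta$, $\gamma$, $Y$ and $\phi$ being exactly what makes these one-sided limits work. The only other delicate point, the passage $Y^n\uparrow Y$ and $Y^n_0\uparrow Y_0$, I would import from the construction of the minimal solution in Section \ref{sing_BSDEs}.
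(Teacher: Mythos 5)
Your argument is correct, and it reorganizes the verification in a way that is genuinely different from the paper's, chiefly in how the lower bound $v\ge Y_0|\xi|^p$ and the admissibility of $x^*$ are obtained. The paper proves the lower bound by first \emph{solving} the penalized unconstrained problem \eqref{penprob}: it identifies its optimizer $x^L$, proves $v^L=Y^L_0|\xi|^p$ via the maximum-principle comparison of Proposition \ref{penverification} (the convexity inequality $g(\dot x^L)-g(\dot x)\le g'(\dot x^L)(\dot x^L-\dot x)$ integrated against $d\theta$ and the martingale $M^L$), and then chains $v\ge v^L=Y^L_0\to Y_0$. You instead complete the square in the drift of $Y^L|x|^p$: the pointwise Young inequality $\phi^L_r(u)\ge 0$ with equality at $u=-(Y^L_r/\eta_r)^{q-1}x_r$ gives $J(x)\ge E\int_0^T(\eta_r|\dot x_r|^p+(\gamma_r\wedge L)|x_r|^p)dr\ge Y^L_0|\xi|^p$ directly for every $x\in\cA_0$ (using $x_T=0$ to kill the terminal term), without ever knowing that $Y^L_0$ is itself the value of a control problem; note that the approximating driver of \eqref{finite_BSDE} carries $\gamma\wedge L$, so your identity produces the truncated running cost, but the inequality still points the right way. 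Likewise, for $x^*_T=0$ the paper uses supermartingale convergence of $M_t=pY_t x_t^{p-1}+p\int_0^t\gamma_s x_s^{p-1}ds$, whereas you use Fatou on $E[Y_t|x^*_t|^p]\le Y_0$ together with $\liminf_{t\nearrow T}Y_t=\infty$ and monotonicity of $|x^*|$; both work. The upper bound $Y_0\ge J(x^*)$ is identical to the paper's. What each route buys: the paper's detour through \eqref{penprob} also delivers the solution of the penalized problem (of independent interest and reused in Proposition \ref{suff_cond_minimality}), while your single-identity argument is shorter and makes the duality between the BSDE and the cost functional transparent through one pathwise inequality. The integrability bookkeeping you defer is real but is resolved by exactly the devices the paper already uses: reduce to non-increasing controls via Lemma \ref{nonincr_strategies} so that $|x|\le|\xi|$ and the stochastic integral against $Z^L\in\cM^2(0,T)$ is a true martingale, use the bound $Y^L\le(1+T)L$ from Proposition \ref{finite_existence} together with $1/\eta^{q-1}\in\cM^1(0,T)$ to control $(Y^L)^q/\eta^{q-1}$, and restrict to $J(x)<\infty$ so that Young's inequality makes the cross term absolutely integrable. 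With those points made explicit, your proof is complete.
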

The following deterministic example illustrates that a violation of the integrability condition $1/\eta^{q-1}\in \mathcal M^1(0,T)$ may lead to a minimization problem where no optimal control exists.
\begin{ex}
 Let $T=1$, $\eta_t=(1-t)^\beta$ for some $\beta\ge0$ , $\gamma_t = 0$ and $p=q=2$. Then we have $1/\eta^{q-1} \in L^1([0,T])$ if and only if $\beta<1$. In this case Theorem \ref{sec main} yields that $x_t=(1-t)^{1-\beta}$ is an optimal control. In the case $\beta\ge 1$ consider the control $x_t=(1-t)^\alpha$ for some $\alpha>0$. We compute
\be
J(x)=\int_0^1\eta_t\dot x_t^2dt=\alpha^2\int_0^1(1-t)^{2\alpha +\beta-2}dt.
\ee
Since $\beta\ge 1>1-2\alpha$ the integral is finite and has the value
\be
J(x)=\frac{\alpha^2}{2\alpha+\beta-1}.
\ee
Taking the limit $\alpha\searrow 0$ yields $v=0$, but there exists no control in $\mathcal A_0$ attaining this value.
\end{ex}
\begin{remark}
If $p=1$, then the control problem also does not possess an optimal control in $\mathcal A_0$ (except for some simple cases). For $p=1$ the right formulation of the problem would be to allow for singular controls; and consequently the description of optimal controls would require different methods.
\end{remark}

We prove Theorem \ref{first main} in Section \ref{sing_BSDEs} (see Theorem \ref{existence_singular}) and Theorem \ref{sec main} in Section \ref{verification} (see Theorem \ref{opt_strat_constr}). Before tackling the proofs we provide a heuristic derivation of the BSDE \eqref{introBSDE}.
\subsection*{Heuristic derivation of the BSDE}
Throughout this section we assume $\xi>0$. First we show that
in this case we can restrict attention to non-increasing non-negative controls.
To this end we denote the set of controls in $\mathcal A_0$ with
non-increasing sample paths by $\mathcal D _0$.

\begin{lemma}\label{nonincr_strategies}
Every control $x\in \mathcal A_0$ can be modified to a control $\underline x \in \mathcal D_0$ such that $J(x)\ge J(\underline x)$. In particular, we have $v=\inf_{x\in \mathcal D_0}J(x)$.
\end{lemma}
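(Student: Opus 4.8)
The plan is to define $\underline x$ as the running infimum of $x$, truncated at zero, and then to verify directly that this modification increases neither term of the cost functional. Concretely, for $x \in \mathcal A_0$ with $\xi > 0$, I would set $m_t := \min_{0 \le s \le t} x_s$ and $\underline x_t := \max(0, m_t)$. By construction $m$, and hence $\underline x$, is non-increasing in $t$; moreover $\underline x_0 = \max(0, x_0) = \xi$, and since $x_T = 0$ forces $m_T \le 0$ we obtain $\underline x_T = 0$. The running infimum of a progressively measurable, path-continuous process is again progressively measurable (each $m_t$ is $\mathcal F_t$-measurable and $t \mapsto m_t$ is continuous), and truncation at zero preserves this; thus $\underline x \in \mathcal D_0$ once absolute continuity is checked.

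The crucial estimate is the Lipschitz-type bound
\[
0 \le m_{t'} - m_t \le \int_{t'}^t |\dot x_s|\, ds, \qquad t' \le t,
\]
which I would derive by writing $m_t = \min\bigl(m_{t'}, \min_{t' \le s \le t} x_s\bigr)$, so that $m_{t'} - m_t = \bigl(m_{t'} - \min_{t' \le s \le t} x_s\bigr)^+$, and then combining $m_{t'} \le x_{t'}$ with $\min_{t' \le s \le t} x_s \ge x_{t'} - \int_{t'}^t |\dot x_s|\, ds$. This bound shows at once that $m$ is absolutely continuous (its variation over any finite family of disjoint intervals is dominated by the integral of $|\dot x|$ over their union, which is small when the total length is small), and, dividing by $t - t'$ and passing to the limit at Lebesgue points, that $|\dot m_t| \le |\dot x_t|$ for almost every $t$. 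Since $\underline x = \max(0, m)$ is the composition of the $1$-Lipschitz map $r \mapsto \max(0,r)$ with the absolutely continuous function $m$, it is itself absolutely continuous with $\dot{\underline x}_t = \dot m_t\, \mathbf 1_{\{m_t > 0\}}$ a.e., whence $|\dot{\underline x}_t| \le |\dot x_t|$ a.e.

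With these two facts the cost comparison is immediate. For the velocity term, $\eta \ge 0$ and $|\dot{\underline x}_t| \le |\dot x_t|$ give $\int_0^T \eta_t |\dot{\underline x}_t|^p\,dt \le \int_0^T \eta_t |\dot x_t|^p\,dt$. For the position term I would check pointwise that $|\underline x_t| \le |x_t|$: when $m_t \ge 0$ one has $0 \le \underline x_t = \min_{s \le t} x_s \le x_t$, and when $m_t < 0$ one has $\underline x_t = 0$; in either case $\gamma \ge 0$ yields $\int_0^T \gamma_t |\underline x_t|^p\,dt \le \int_0^T \gamma_t |x_t|^p\,dt$. Adding the two inequalities gives $J(\underline x) \le J(x)$. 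The final assertion $v = \inf_{x \in \mathcal D_0} J(x)$ then follows because $\mathcal D_0 \subseteq \mathcal A_0$ gives one inequality, while the construction gives $\inf_{x \in \mathcal D_0} J(x) \le J(\underline x) \le J(x)$ for every $x \in \mathcal A_0$, hence $\inf_{x \in \mathcal D_0} J(x) \le v$.

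I expect the main obstacle to be the regularity bookkeeping around the running infimum, namely establishing absolute continuity of $m$ together with the a.e.\ derivative bound $|\dot m| \le |\dot x|$, since the contact set $\{t : m_t = x_t\}$ on which $m$ actually decreases may be complicated; the displayed Lipschitz estimate is precisely what sidesteps any direct analysis of that set. The boundary conditions, monotonicity, progressive measurability, and the two cost inequalities are then routine.
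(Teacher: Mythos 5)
Your proposal is correct and follows essentially the same route as the paper: both define $\underline x$ as the running minimum of $x$ truncated at zero, verify $\underline x\in\mathcal D_0$, and compare the two cost terms via $|\dot{\underline x}|\le|\dot x|$ and $|\underline x|\le|x|$. The only difference is that you justify absolute continuity and the a.e.\ derivative bound through the explicit Lipschitz estimate on the running minimum, whereas the paper invokes the representation $\underline x_t=\int_0^t\dot x_s\,\mathbf 1_{\{x_s=\underline x_s\}}\,ds$; your version is a more detailed unpacking of the same fact.
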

\begin{proof}
Let $x\in \mathcal A_0$ and define its running minimum cut off at
zero by $\underline x_t=\min_{0\le s\le t}x_s\vee 0$. Notice that
$\underline x$ is absolutely continuous
since $\underline x_t = \int_0^t \dot x_s 1_{\{ x_s = \underline
x_s\}} ds$. Hence $\underline x \in \mathcal D_0$. Observe that $|\dot {\underline x}_t| \le |\dot x_t|$, and therefore we have
$E\left[\int_0^T\eta_t|\dot x_t|^pdt\right]\ge
E\left[\int_0^T\eta_t|\dot {\underline x}_t|^pdt\right]$. Since
$\underline x \le x$ on $\Omega \times [0,T]$ it follows that
$E\left[\int_0^T\gamma_t|x_t|^pdt\right]\ge
E\left[\int_0^T\gamma_t|{\underline x}_t|^pdt\right]$. Thus, we
obtain $J(x)\ge J(\underline x)$.
\end{proof}
The next result, a maximum principle, provides a sufficient condition for optimality in \eqref{min_problem}. We remark that we use it only for the heuristic derivation of the BSDE \eqref{introBSDE}. The rigorous verification in Section \ref{verification} will be performed via a penalization.
\begin{propo}[Maximum principle] \label{suff_cond}
 Assume that $x\in \mathcal D_0$ and that $M_t=p\eta_t |\dot x_t|^{p-1}+p\int_0^t\gamma_sx_s^{p-1}ds$ is a martingale with $E[M_T^2]<\infty$. Then $x$ is optimal in \eqref{min_problem}.
\end{propo}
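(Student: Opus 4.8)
The plan is to exploit that $J$ is convex in the control and that the hypotheses on $M$ encode the first-order (Euler--Lagrange) optimality condition. By Lemma~\ref{nonincr_strategies} it suffices to show $J(x) \le J(y)$ for every $y \in \mathcal{D}_0$, and we may assume $J(y) < \infty$; since $x \in \mathcal{D}_0$, this then gives $J(x) = \inf_{\mathcal{D}_0} J = v$. (The same integrability facts used below show $J(x) < \infty$.) Fix such a $y$ and set $\delta = y - x$, which is absolutely continuous with $\delta_0 = \delta_T = 0$ and, because $x$ and $y$ are non-increasing from $\xi$ to $0$, satisfies $|\delta_t| \le \xi$ and $\int_0^T |\dot\delta_t|\,dt \le 2\xi$ almost surely.

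First I would apply the subgradient inequality for the convex function $a \mapsto |a|^p$ ($p>1$), namely $|b|^p \ge |a|^p + p|a|^{p-1}\sgn(a)(b-a)$, to the pairs $(\dot x_t, \dot y_t)$ and $(x_t, y_t)$, multiply by $\eta_t \ge 0$ and $\gamma_t \ge 0$, and integrate. Using $\dot x_t \le 0$ and $x_t \ge 0$ on $\mathcal{D}_0$, the first-order coefficients become $p\eta_t|\dot x_t|^{p-1}\sgn(\dot x_t) = A_t - M_t$ and $p\gamma_t x_t^{p-1} = \dot A_t$, where $A_t := p\int_0^t \gamma_s x_s^{p-1}\,ds$ so that $M_t = p\eta_t|\dot x_t|^{p-1} + A_t$. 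This yields the decomposition
\[
J(y) = J(x) + E\Big[\int_0^T (A_t - M_t)\dot\delta_t\,dt + \int_0^T \dot A_t\,\delta_t\,dt\Big] + E\Big[\int_0^T (f_t + g_t)\,dt\Big],
\]
with nonnegative remainders $f_t, g_t$ produced by convexity. It thus remains to show that the bracketed first-order term $\mathrm{FO}$ vanishes.

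For this term I would first perform an ordinary (pathwise) integration by parts $\int_0^T A_t \dot\delta_t\,dt = -\int_0^T \dot A_t\,\delta_t\,dt$, using $\delta_0 = \delta_T = 0$, which collapses $\mathrm{FO}$ to $-E\big[\int_0^T M_t\,\dot\delta_t\,dt\big]$. Then, since $M$ is a continuous martingale and $\delta$ has finite variation (so $[M,\delta] = 0$), the product rule gives $0 = M_T\delta_T = \int_0^T M_t\,\dot\delta_t\,dt + \int_0^T \delta_t\,dM_t$, whence $\mathrm{FO} = E[\int_0^T \delta_t\,dM_t]$. Because $\delta$ is bounded by $\xi$ and $E[M_T^2] < \infty$, one has $E[\int_0^T \delta_t^2\,d[M]_t] \le \xi^2 E[[M]_T] < \infty$, so $\int_0^\cdot \delta_t\,dM_t$ is a genuine square-integrable martingale and $\mathrm{FO} = 0$. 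Combined with $f_t, g_t \ge 0$ this gives $J(y) \ge J(x)$.

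The main obstacle is integrability: justifying that $\mathrm{FO}$ is well defined and that the stochastic integral has zero expectation. This is where the structural facts enter — the total-variation bound $\int_0^T |\dot\delta_t|\,dt \le 2\xi$ inherited from monotonicity, together with $0 \le A_t \le M_t \le \sup_{s\le T}|M_s| \in L^2$ (from $\eta \ge 0$, the continuity of $M$, $E[M_T^2]<\infty$, and Doob's inequality), shows that $\int_0^T|A_t - M_t||\dot\delta_t|\,dt$ and $\int_0^T |\dot A_t||\delta_t|\,dt$ are integrable. Consequently the decomposition of $J(y)$ involves no $\infty-\infty$ indeterminacy and the expectation of the stochastic integral indeed vanishes.
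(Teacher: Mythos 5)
Your proposal is correct and follows essentially the same route as the paper: the subgradient inequality for $z\mapsto|z|^p$ applied to both the $\eta$- and $\gamma$-terms, a pathwise integration by parts using $\delta_0=\delta_T=0$, and the vanishing in expectation of $\int_0^\cdot \delta_t\,dM_t$ via boundedness of $\delta$ and $E[M_T^2]<\infty$. The only (welcome) difference is that you are more explicit about the integrability bookkeeping that rules out an $\infty-\infty$ issue in the first-order term.
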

\begin{proof}
 Let $g(z)=\lvert z \rvert ^p$ and $x\in \mathcal D_0$ such that $M_t=p\eta_t |\dot x_t|^{p-1}+p\int_0^t\gamma_sx_s^{p-1}ds$ is a martingale with $E[M_T^2]<\infty$. Let $ y \in \mathcal D_0$ and introduce $\theta_t=x_t- y_t$. Then $\theta$ satisfies $\theta_0=\theta_T=0$ a.s. Furthermore, since $x$ and $y$ are non-increasing it follows that $\theta$ is bounded: $|\theta_t|\le 2 |\xi|$. Since $\dot x\le 0$ on $\Omega \times [0,T]$ we have $g'(\dot x_t)=-p|\dot x_t|^{p-1}$. The convexity of $g$ implies for all $t\in[0,T]$
\be
g(\dot x_t)-g(\dot y_t)\le g'(\dot x_t)(\dot x_t-\dot y_t).
\ee
Thus, by integration by parts we obtain
\be
\int_0^T\eta_t(g(\dot x_t)-g(\dot y_t))dt&\le& \int_0^T\eta_tg'(\dot x_t)d\theta_t=\int_0^T\left(\int_0^tp\gamma_sx^{p-1}_sds-M_t \right)d\theta_t\\
&=&\int_0^T\theta_tdM_t-\int_0^T\gamma_tg'(x_t)\theta_tdt.
\ee
Since $\theta$ is bounded and $M$ is a martingale with $E[M_T^2]<\infty$ it follows that the integral process $\int_0^\cdot \theta_tdM_t$ is a martingale starting in $0$. In particular, it vanishes in expectation. Using again the convexity of $g$ yields for $t\in[0,T]$
\be
g(x_t)-g(y_t)\le g'(x_t)( x_t-y_t).
\ee
Taking expectations implies optimality of $x$:
\be
E\left[\int_0^T\eta_t(g(\dot x_t)-g(\dot y_t))dt\right]\le -E\left[\int_0^T\gamma_tg'(x_t)\theta_tdt\right]\le -E\left[\int_0^T\gamma_t(g(x_t)-g(y_t))dt\right].
\ee
\end{proof}
\begin{remark}
 Observe that the previous two results hold in a more general framework than the one under consideration here. We can replace $y\mapsto |y|^p$ by any convex function which attains its minimum at $y=0$.
\end{remark}

We next observe that the {\it relative} control rate $r_t=\frac{\dot x_t}{x_t}$ of an optimal control $x \in \mathcal A_0$ is independent of the current state $x_t$. To this end fix $t<T$ and $\xi_2>\xi_1>0$. Assume that $(x^1_s)_{t\le s\le T}$ is an optimal control to close the position $\xi_1$ in the period $[t,T]$. Then the homogeneity of $y\mapsto |y|^p$ implies that the control $x^2_s=\frac{\xi_2}{\xi_1}x^1_s$, $s\in[t,T]$, is optimal to close the position $\xi_2$ in the period $[t,T]$. In particular the relative control rates at time $t$ coincide $\frac{\dot x^1_t}{\xi_1}=\frac{\dot x^2_t}{\xi_2}$. Hence, an optimal control can be represented in feedback form $\dot x_t=r_tx_t$, where $r_t$ is the relative control rate, which does not depend on $x_t$. We denote by $q$ the H\"older conjugate of $p$ and rewrite $r_t$ as $r_t=-\left(\frac {Y_t}{\eta_t}\right)^{q-1}$ for some semi-martingale $Y$ and make the ansatz that an optimal control $x$ is of the form
\ben\label{ode_of_x}
\dot x_t=-\left(\frac {Y_t}{\eta_t}\right)^{q-1}x_t
\een
 with $x_0=1$. The solution of this pathwise ordinary differential equation is given by
\ben\label{expl_rep_x}
x_t=e^{-\int_0^t\left(\frac {Y_s}{\eta_s}\right)^{q-1}ds}.
\een

Proposition \ref{suff_cond} shows that $x\in \mathcal A_0$ is optimal if the process
$p\eta |\dot x |^{p-1}+p\int_0^\cdot \gamma_sx_s^{p-1}ds$ is a martingale. Since $(\mathcal F_t)$ is a Brownian filtration this is
equivalent to the existence of a predictable process $\phi$ such
that
\be
d (\eta |\dot x| ^{p-1})_t+\gamma_tx_t^{p-1}dt=\phi_t d W_t.
\ee
Using the equality $\eta_t|\dot x_t|^{p-1}=Y_tx_t^{p-1}$ and applying the integration by parts formula to the product $Y x^{p-1}$ we obtain
\be
 d (\eta |\dot x|^{p-1})_t+\gamma_tx_t^{p-1}dt&=&x_t^{p-1}  d Y_t +(p-1)Y_tx_t^{p-2} d x_t +\gamma_tx_t^{p-1}dt\\
&=&x_t^{p-1}\left(  d Y_t -\left((p-1)\frac {Y_t^q}{\eta_t^{q-1}}-\gamma_t\right) d t\right).
\ee
Setting $Z_t=\phi_t/x_t^{p-1}$ we see that $Y$ satisfies the BSDE
\ben\label{BSDE1}
dY_t=\left((p-1)\frac {Y^q_t}{\eta_t^{q-1}} -\gamma_t\right)d t+Z_t  dW_t.
\een
In view of Equation \eqref{expl_rep_x} the singular terminal condition $Y_T=\infty$ is necessary to ensure that $x_T=0$. In Theorem \ref{opt_strat_constr} we show that this condition is indeed sufficient.

\section{Construction of a BSDE solution with singular terminal condition}\label{sing_BSDEs}
In this section we construct a solution of the BSDE \eqref{introBSDE} with singular terminal condition. To this end we first show existence of solutions to BSDEs with cut off drivers and finite deterministic terminal condition $L>0$. In a second step we let $L$ tend to infinity and obtain a solution with a singular terminal condition. We show that this particular solution is the minimal solution of \eqref{BSDE1}. We remark that the second step of our construction bears similarities with the existence proof conducted by Popier in \cite{popier2006} resp.\ \cite{popier2007}.

Let us clarify some terminology concerning BSDEs. The pair consisting of the driver and the terminal condition of a BSDE will be referred to as its {\it parameters}. Given a solution $(Y,Z)$ of a BSDE, we call the first component $Y$ the {\it solution process} and the second component $Z$ the {\it martingale component}. 

\subsection{Approximation}
Consider the BSDE
\ben\label{finite_BSDE}
dY^L_t=\left((p-1)\frac {(Y^L_t)^q}{\eta_t^{q-1}} - (\gamma_t \wedge L)\right) d t+Z^L_t  d W_t,
\een
with terminal condition $Y^L_T=L$.
\begin{propo}\label{finite_existence}
 Assume that $\eta \in \mathcal M^2(0,T)$ and $\frac 1{\eta^{q-1}} \in \mathcal M^1(0,T)$. Then there exists a solution $(Y^L,Z^L)$ to \eqref{finite_BSDE} with $Z^L\in \mathcal M^2(0,T)$. For every $t\in[0,T]$ the random variable $Y_t^L$ is bounded from below and above as follows
\ben\label{bounds_YL}
\frac 1 {\left( \frac1{L^{q-1}}+E\left[\int_t^T\frac{1}{\eta_s^{q-1}} ds\big|\mathcal F_t\right] \right)^{p-1}}\le Y^L_t\le (1+T)L\wedge \frac 1{(T-t)^p}E\left[\left.\int_t^T (\eta_s + (T-s)^p \gamma_s) ds \right| \mathcal F_t\right].
\een

\end{propo}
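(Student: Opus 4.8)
The plan is to realize $(Y^L,Z^L)$ as the solution of a BSDE whose superlinear term has been truncated in the $y$-variable, and then to show, via comparison with explicit sub- and supersolutions, that the solution automatically stays in a range on which the truncation is inactive. The two-sided estimate \eqref{bounds_YL} will come out of essentially the same computations.

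First I would fix $C=(1+T)L$ and replace $y\mapsto y^q$ by its truncation $g(y)=\left((y^+)\wedge C\right)^q$, so that the modified driver $\tilde f(t,y)=(p-1)g(y)/\eta_t^{q-1}-(\gamma_t\wedge L)$ is Lipschitz in $y$ with the random, time-dependent coefficient $\ell_t=(p-1)q\,C^{q-1}\eta_t^{-(q-1)}$, which satisfies $\int_0^T\ell_t\,dt<\infty$ a.s.\ by $(\mathbf{I1})$. Since $\tilde f$ does not depend on $z$, the terminal value $L$ is bounded, and $\tilde f(\cdot,0)=-(\gamma\wedge L)$ is bounded, standard existence results for BSDEs with an integrable-in-time Lipschitz coefficient furnish a solution $(Y^L,Z^L)$ with $Z^L\in\mathcal M^2(0,T)$. (If one wishes to use only the classical Lipschitz theorem, one first truncates $\eta^{-(q-1)}$ at a level $m$, solves the resulting BSDE with bounded coefficients, and passes to the limit $m\to\infty$ using the uniform bounds below.)

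The core of the argument is the verification of two explicit comparison processes through It\^o's formula combined with a Young-type inequality. For the lower bound I would set $A_t=L^{-(q-1)}+E\!\left[\int_t^T\eta_s^{-(q-1)}\,ds\mid\mathcal F_t\right]$ and $\underline Y_t=A_t^{1-p}$; using the martingale representation of $E[\int_0^T\eta_s^{-(q-1)}ds\mid\mathcal F_t]$ and the identities $(p-1)(q-1)=1$ and $q(1-p)=-p$, a direct computation shows that the drift of $\underline Y$ dominates $(p-1)\underline Y_t^{\,q}/\eta_t^{q-1}-(\gamma_t\wedge L)$ (the It\^o correction term and the $\gamma$-term only help), while $\underline Y_T=L$. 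Hence $\underline Y$ is a subsolution, and comparison for the truncated (globally Lipschitz) BSDE yields $Y^L_t\ge\underline Y_t$, the left inequality in \eqref{bounds_YL}. The analogous check for the constant-slope process $(1+T-t)L$, whose drift $-L$ satisfies $-L\le(p-1)((1+T-t)L)^q/\eta_t^{q-1}-(\gamma_t\wedge L)$ because $\gamma\wedge L\le L$, shows it is a supersolution, so $Y^L_t\le(1+T-t)L\le C$. Both comparison processes lie in $[0,C]$, where $g$ is the identity power, so the truncation is inactive and $(Y^L,Z^L)$ solves \eqref{finite_BSDE} and obeys the first part $(1+T)L$ of the upper bound.

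For the sharper upper bound I would avoid any comparison at the singular time $T$ and instead argue by a submartingale property of $\Gamma_t=(T-t)^pY^L_t$. It\^o's formula, together with the pointwise inequality $(p-1)\tau^p a^q/e^{q-1}-p\tau^{p-1}a+e\ge0$ for $a,e,\tau\ge0$ — which upon setting $\lambda=a\tau^{p-1}/e$ is exactly the statement that $\lambda\mapsto(p-1)\lambda^q-p\lambda+1$ attains its minimum value $0$ at $\lambda=1$ — shows that $\Gamma_t+\int_0^t\left(\eta_s+(T-s)^p(\gamma_s\wedge L)\right)ds$ has nonnegative drift, hence is a submartingale (its martingale part $\int_0^\cdot(T-s)^pZ^L_s\,dW_s$ is a true martingale since $Z^L\in\mathcal M^2(0,T)$). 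Comparing with $T'<T$ and letting $T'\nearrow T$, the boundary term $E[(T-T')^pY^L_{T'}\mid\mathcal F_t]\le(T-T')^pC\to0$ vanishes because $Y^L\le C$, and monotone convergence upgrades the integral to $[t,T]$, giving $(T-t)^pY^L_t\le E[\int_t^T(\eta_s+(T-s)^p\gamma_s)\,ds\mid\mathcal F_t]$, the remaining half of \eqref{bounds_YL}. I expect the existence step to be the main obstacle: the driver is simultaneously superlinear in $y$ (not globally Lipschitz) and weighted by $\eta^{-(q-1)}$, which by $(\mathbf{I1})$ is only integrable, not square-integrable, in time, so neither Pardoux--Peng nor the monotone/quadratic frameworks apply off the shelf. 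The resolution is precisely the interplay above — truncating $y^q$ makes the equation Lipschitz, and the Young-type inequality delivers a priori bounds strong enough to confine the solution to the region where the truncation does nothing, so that a genuine solution of the original equation is recovered.
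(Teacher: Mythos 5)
Your proposal is correct in its overall architecture but follows a genuinely different route from the paper. The paper never truncates the nonlinearity in $y$: it regularizes the \emph{coefficient}, replacing $\eta$ by $\eta\vee\delta$, and gets existence from the monotone-driver theorem of Pardoux (the driver is decreasing in $y$, so no Lipschitz condition is needed); it then passes $\delta\downarrow 0$ using monotonicity of $Y^{\delta,L}$ in $\delta$ together with an It\^o/Cauchy estimate for $Z^{\delta,L}$ driven by the $\mathcal M^1$-convergence of $(\eta\vee\delta)^{-(q-1)}$. For the bounds, the paper proves the upper estimate by comparison with an explicitly solvable \emph{linear} BSDE with driver $-py/(T-t)+(\eta_t\vee\delta)(T-t)^{-p}+\gamma_t$ (using the same pointwise inequality $(p-1)y^q-pa^{q-1}y+a^q\ge 0$ that underlies your submartingale computation), and the lower estimate by writing $U_t=V_t^{1-p}$ with $V_t=L^{-(q-1)}+E[\int_t^T(\eta_s\vee\delta)^{-(q-1)}ds\mid\mathcal F_t]$ and invoking the comparison theorem for \emph{quadratic} BSDEs (Kobylanski), since the It\^o correction produces a $z^2$-term in the driver of $U$. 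Your submartingale argument for $(T-t)^pY^L_t$ and your subsolution $A_t^{1-p}$ are essentially the same computations packaged differently, and both handle the boundary at $T$ the same way the paper does; your version is arguably more self-contained on the upper bound (no linear-BSDE representation formula needed), while the paper's $\delta$-truncation keeps $V_t$ bounded above by $L^{-(q-1)}+T\delta^{-(q-1)}$, which is what makes the quadratic comparison theorem legitimately applicable.

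The one genuine soft spot is your existence step. A driver that is Lipschitz in $y$ with random coefficient $\ell_t=(p-1)qC^{q-1}\eta_t^{-(q-1)}$ satisfying only $E\int_0^T\ell_t\,dt<\infty$ is \emph{not} covered by the classical Pardoux--Peng theorem, and the stochastic-Lipschitz frameworks (El Karoui--Huang type) require $\int_0^T\ell_t\,dt$ to be bounded or to admit exponential moments for the weighted fixed-point argument; so "standard existence results" cannot be quoted off the shelf here. Your parenthetical fallback --- truncate $\eta^{-(q-1)}$ at level $m$ and pass to the limit --- does work (it is exactly the paper's $\delta$-regularization), but "passing to the limit using the uniform bounds below" is not enough: the $L^\infty$ bounds on $Y$ give monotone a.s.\ convergence of the $Y$-component only. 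To obtain $Z^L\in\mathcal M^2(0,T)$ and to pass to the limit in the stochastic integral you still need the Cauchy estimate
\begin{equation*}
E\left[\int_0^T|Z^{m}_s-Z^{m'}_s|^2\,ds\right]\le 2C\,E\left[\int_0^T\Bigl|\bigl(\eta_s^{-(q-1)}\wedge m\bigr)-\bigl(\eta_s^{-(q-1)}\wedge m'\bigr)\Bigr|\,ds\right],
\end{equation*}
which the paper derives from It\^o's formula applied to $(Y^{m}-Y^{m'})^2$, exploiting the monotonicity of the driver in $y$ and the uniform bound $Y\le(1+T)L$, together with the $\mathcal M^1$-convergence of the truncated coefficients guaranteed by $\mathbf{(I1)}$. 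This estimate goes through verbatim for your $y$-truncated driver (which is still decreasing in $y$), so the gap is repairable, but it must be supplied; likewise, the comparison of $Y^L$ with your subsolution $A_t^{1-p}$ should be justified either by linearizing the difference and using a representation as in Lemma \ref{LBSDE}, or, as the paper does, by keeping the quadratic $z$-term explicit and citing a quadratic comparison theorem.
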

\begin{proof}
 Let $f(t,y)=-(p-1)\frac {y^q}{\eta_t^{q-1}} + (\gamma_t \wedge L)$ denote the driver of the BSDE \eqref{finite_BSDE}.
 Define $f^\delta(t,y)=-(p-1)\frac {y^q}{(\eta_t\vee\delta)^{q-1}} + (\gamma_t \wedge L)$ for $\delta>0$. Being decreasing in $y$, bounded in $\omega$, the driver $(\omega,t,y)\mapsto f^\delta(t,y\vee 0)$ - which does not depend on $z$ - satisfies all conditions of Theorem 2.2. in \cite{pardoux1999bsdes}. Hence, for every $L>0$ there exists a solution $(Y^{\delta,L},Z^{\delta,L})$ to the BSDE with parameters $(f^\delta(t,y\vee 0),L)$. Moreover, any such solution satisfies
\ben\label{int_estimate_Ydelta}
E\left[\sup_{0\le t\le T}|Y^{\delta,L}_t|^2+\int_0^T (Z_t^{\delta,L})^2 d
t\right]<\infty.
\een
For $L=0$ the solution is given by
$(Y^{\delta,0},Z^{\delta,0})=(0,0)$. The comparison theorem
\cite[Theorem 2.4]{pardoux1999bsdes} implies that $Y^{\delta,L}$
is nonnegative and, hence, $Y^{\delta,L}$ is also a solution to
the BSDE with parameters $(f^\delta,L)$.

We can also derive an upper bound for $Y^{\delta,L}$ by appealing to the comparison theorem. Note that we have $f^\delta(t,y)\le L$ for $y\ge 0$. This implies
\ben\label{up_bound_Ydelta}
Y^{\delta,L}_t\le (1+T) L
\een
for all $t\in[0,T]$.

We obtain a solution of the BSDE \eqref{finite_BSDE} by letting $\delta$ converge to zero. Indeed, the mapping
$\delta\mapsto f^\delta$ is increasing, which implies that
$Y^{\delta_1,L}\le Y^{\delta_2,L}$ if $\delta_1\le \delta_2$. In
particular we can define $Y^L$ as the decreasing limit of
$Y^{\delta,L}$ as $\delta \searrow 0$. For the convergence of
the control process $Z^{\delta,L}$, let $(\delta_n)_{n\ge 0}$ be a
sequence with $\delta_n\searrow 0$ as $n \to \infty$. Fix $n\ge
m$. Then we have $Y^{\delta_n,L}\le Y^{\delta_m,L}$. For all $0\le
t\le T$ It\^o's formula leads to
\begin{equation}\label{control_estimate}
 \begin{split}
  \int_0^T(Z_s^{\delta_n,L}-Z_s^{\delta_m,L})^2 d s =& -(Y_0^{\delta_n,L}-Y_0^{\delta_m,L})^2 \\
&-2\int_0^T (Y_s^{\delta_n,L}-Y_s^{\delta_m,L})(Z_s^{\delta_n,L}-Z_s^{\delta_m,L}) d W_s \\
&+2 \int_0^T(Y_s^{\delta_n,L}-Y_s^{\delta_m,L})(f^{\delta_n}(s,Y_s^{\delta_n,L})-f^{\delta_m}(s,Y_s^{\delta_m,L})) d s
 \end{split}
\end{equation}
Estimates \eqref{int_estimate_Ydelta} and \eqref{up_bound_Ydelta} imply
\be
E\left[\int_0^T (Y_s^{\delta_n,L}-Y_s^{\delta_m,L})(Z_s^{\delta_n,L}-Z_s^{\delta_m,L}) d W_s\right]=0.
\ee
By monotonicity of $f^{\delta_m}$ and estimate \eqref{up_bound_Ydelta} we have
\begin{equation*}
\begin{split}
(Y_s^{\delta_n,L}-Y_s^{\delta_m,L})(f^{\delta_n}&(s,Y_s^{\delta_n,L})-f^{\delta_m}(s,Y_s^{\delta_m,L})) \\
&\le (Y_s^{\delta_n,L}-Y_s^{\delta_m,L})(f^{\delta_n}(s,Y_s^{\delta_n,L})-f^{\delta_m}(s,Y_s^{\delta_n,L})) \\
&=(p-1)(Y_s^{\delta_m,L}-Y_s^{\delta_n,L})(Y_s^{\delta_n,L})^q\left(\frac 1{(\eta_s\vee \delta_n)^{q-1}}-\frac 1{(\eta_s\vee \delta_m)^{q-1}}\right)\\
&\le C \left(\frac 1{(\eta_s\vee \delta_n)^{q-1}}-\frac 1{(\eta_s \vee \delta_m)^{q-1}}\right)
\end{split}
\end{equation*}
for all $s\in [0,T]$ and a constant $C>0$. Taking expectations
in Equation \eqref{control_estimate} yields
\be
E\left[\int_0^T(Z_s^{\delta_n,L}-Z_s^{\delta_m,L})^2 d
s\right]\le 2C E\left[\int_0^T\left(\frac 1{(\eta_s\vee
\delta_n)^{q-1}}-\frac 1{(\eta_s\vee
\delta_m)^{q-1}}\right)ds\right].
 \ee
The sequence $\left(\frac
1{(\eta\vee \delta_n)^{q-1}}\right)_{n\ge0}$ converges in
$\mathcal M^1(0,T)$ to $\frac 1{\eta^{q-1}}$ as $n\to
\infty$. This implies that $(Z^{\delta_n,L})_{n\ge 0}$ is a Cauchy
sequence in $\mathcal M^2(0,T)$ and converges to $Z^L\in
\mathcal M^2(0,T)$. In particular the random variable $\int_t^TZ^{\delta_n,L}_r  d W_r$ converges to $\int_t^TZ^{L}_r  d W_r$ in $L^2(\Omega)$ as $n\to \infty$. We obtain almost sure convergence by passing to a subsequence. Taking the limit $n \to \infty$ in
\be
Y_t^{\delta_n,L}=L-(p-1)\int_t^T\frac
{(Y_r^{\delta_n,L})^q}{(\eta_r\vee\delta_n)^{q-1}} d
r-\int_t^TZ^{\delta_n,L}_r  d W_r, \ee
and using estimate
\eqref{up_bound_Ydelta} yields that $(Y^L,Z^L)$ satisfies almost surely the BSDE
\be
Y^L_t=L-(p-1)\int_t^T\frac {(Y_r^L)^q}{\eta_r^{q-1}} d
r-\int_t^TZ^L_r  d W_r.
\ee

We proceed by deriving the upper and lower bound in \eqref{bounds_YL}. We first estimate $Y^{\delta,L}$ against a linear BSDE with driver
\be
g(t,y)=-p\frac{y}{T-t} + \frac{\eta_t\vee \delta}{(T-t)^p} + \gamma_t.
\ee
By using the inequality
\be
(p-1)y^q-pa^{q-1}y+a^q\ge 0,
\ee
which holds for all $y\ge 0, a\ge 0$, one can show that $f^\delta(t,y)\le g(t,y)$ (take $a=(\eta_t\vee \delta)(T-t)^{-p/q})$. Let $\eps > 0$ and denote by $\Psi^\eps$ the solution process of the BSDE on $[0,T-\eps]$ with parameters $(g, Y^{\delta,L}_{T-\eps})$. By the solution formula for linear BSDEs we have
\be
\Psi^{\eps}_t=E\left[\Gamma_{T-\eps} Y^{\delta,L}_{T-\eps} + \int_t^{T-\eps} \Gamma_s\left(\frac{\eta_s\vee \delta}{(T-s)^p} + \gamma_s\right) ds|\mathcal F_t\right],
\ee
where
\be
\Gamma_t &=& \exp\left(-\int_0^t\frac{p}{T-s} ds \right) = \left( \frac{T-t}{T} \right)^p.
\ee
The comparison theorem implies
\ben\label{up_bound_Ydelta2}
Y_t^{\delta,L}\le \Psi^{\eps}_t= \frac{1}{(T-t)^p} E\left[\eps^p Y^{\delta,L}_{T-\eps} + \int_t^{T-\eps} \left((\eta_s\vee \delta) + (T-s)^p \gamma_s\right) ds|\mathcal F_t\right]
\een
for all $t\in [0,T]$ and $\eps>0$. By letting $\eps \downarrow 0$ we obtain with dominated convergence
\be
Y_t^{\delta,L}\le \frac{1}{(T-t)^p} E\left[\int_t^{T} \left((\eta_s\vee \delta) + (T-s)^p \gamma_s\right) ds|\mathcal F_t\right].
\ee
By letting $\delta \downarrow 0$ we obtain the upper bound in \eqref{bounds_YL}.

In order to derive the lower estimate,
let $V_t = \frac{1}{L^{q-1}} + E\left[\int_t^T\frac{1}{(\eta_s\vee\delta)^{q-1}} d s\big|\mathcal F_t\right]$, and observe that there exists a process $Z \in \cM^2(0,T)$ such that
\be
dV_t = -\frac{1}{(\eta_t \vee\delta)^{q-1}} dt + Z_t dW_t.
\ee
Notice that $\frac{1}{L^{q-1}} \le V_t \le \kappa := \frac{1}{L^{q-1}} + \frac{T}{\delta^{q-1}}$. Next let $U_t = \frac{1}{V_t^{p-1}}$. With Ito's formula one can show that there exists $\tilde Z \in \cM^2(0,T)$ such that
\be
dU_t = -h(t,U_t, \tilde Z_t) dt + \tilde Z_t dW_t,
\ee
where
\be
h(t,u,z) = -(p-1)\frac{(u\wedge L)^q}{(\eta_s\vee\delta)^{q-1}} - \frac12 \frac{p}{p-1} \frac{z^2}{u \vee (1/\kappa^{p-1})}.
\ee
Note that $h(t,u,z) \le f^\delta(t,u)$. Since $U_T = L = Y^{\delta, L}_T$, the comparison theorem for quadratic BSDEs (see e.g. Theorem 2.6 in \cite{kobylanski2000backward}) implies that $U_t \le Y^{\delta,L}_t$. Finally, by letting $\delta \downarrow 0$, we obtain the lower estimate in \eqref{bounds_YL}.

\end{proof}

\subsection{Existence of solutions for BSDEs with singular terminal condition}
First we establish the convergence of $(Y^L,Z^L)$ from Proposition \ref{finite_existence} to a pair $(Y,Z)$ which is a solution to the BSDE \eqref{BSDE1} with singular terminal condition $Y_T=\infty$ in the sense of Definition \ref{def_sing_BSDE}.
\begin{theo}\label{existence_singular}
Assume $\bf{(I1)}$ and $\bf{(I2)}$ hold true. Let $(Y^L,Z^L)$ be the solution to \eqref{finite_BSDE} from Proposition \ref{finite_existence}. Then there exists a process $(Y,Z)$ such that for every $0\le t<T$ the random variable $Y^L_t$ converges a.s.\ to $Y_t$ and $Z^L$ converges in $\mathcal M^2(0,t)$ to $Z$ as $L\to \infty$. The limit process $(Y,Z)$ is a solution to the BSDE \eqref{BSDE1} with singular terminal condition $Y_T=\infty$. Moreover, for every $t\in[0,T]$ the random variable $Y_t$ is almost surely positive:
\ben\label{low_bound_Y}
Y_t\ge \frac 1 {\left( E\left[\int_t^T\frac{1}{\eta_s^{q-1}} ds\big|\mathcal F_t\right] \right)^{p-1}}.
\een
\end{theo}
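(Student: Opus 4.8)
The plan is to obtain $(Y,Z)$ as the monotone limit of the approximations $(Y^L,Z^L)$ of Proposition \ref{finite_existence} and then verify the three properties of Definition \ref{def_sing_BSDE}. First I would record the monotonicity in $L$: since both the terminal value $L$ and the truncated source $\gamma\wedge L$ are nondecreasing in $L$, the comparison theorem yields $Y^{L}\le Y^{L'}$ for $L\le L'$. Together with the $L$\emph{-independent} upper bound in \eqref{bounds_YL}, namely $Y^L_t\le\frac{1}{(T-t)^p}E[\int_t^T(\eta_s+(T-s)^p\gamma_s)\,ds\,|\,\mathcal F_t]$, which is finite a.s.\ for $t<T$ by \textbf{(I1)} and \textbf{(I2)}, this shows that for each fixed $t<T$ the family $(Y^L_t)_L$ increases to a finite limit $Y_t$. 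The positivity estimate \eqref{low_bound_Y} then follows by letting $L\to\infty$ in the lower bound of \eqref{bounds_YL}, the term $1/L^{q-1}$ vanishing in the limit.

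The analytic core is the convergence of the martingale components on $[0,t]$ for $t<T$. Here I would apply It\^o's formula to $(Y^{L'}_s-Y^{L}_s)^2$ on $[0,t]$ for $L\le L'$, take expectations so that the stochastic integral drops out (using the integrability inherited by each bounded $Y^L$), and solve for $E\int_0^t(Z^{L'}_s-Z^L_s)^2\,ds$. Two structural features make the right-hand side tractable: the cross term coming from the nonlinearity has a favorable sign, because $y\mapsto y^q$ is increasing and $Y^{L'}\ge Y^L\ge0$, and may therefore be discarded; and the remaining source term $E\int_0^t(Y^{L'}_s-Y^L_s)(\gamma_s\wedge L'-\gamma_s\wedge L)\,ds$ is nonnegative, dominated by $E\int_0^t Y_s\,\gamma_s\,ds$, and tends to $0$ as $L\to\infty$ by monotone convergence once that integral is shown to be finite. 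This identifies $(Z^L)$ as a Cauchy sequence in $\mathcal M^2(0,t)$ with limit $Z$. Passing to the limit in \eqref{finite_BSDE} restricted to $[0,t]$ is then routine: the drift terms $\frac{(Y^L)^q}{\eta^{q-1}}$ and $\gamma\wedge L$ converge by monotone convergence and the stochastic integral converges in $L^2$, so $(Y,Z)$ satisfies property (i).

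For the integrability (ii), the $Z$-part is immediate from the $\mathcal M^2(0,t)$-convergence. To bound $E[\sup_{s\le t}Y_s^2]$ I would run an a priori estimate: It\^o applied to $Y^2$ produces the dissipative term $+2(p-1)\int\frac{Y_r^{q+1}}{\eta_r^{q-1}}\,dr$ on the favorable side, against which the cross term $2Y_r\gamma_r$ can be absorbed by Young's inequality,
\[
2Y_r\gamma_r\le (p-1)\frac{Y_r^{q+1}}{\eta_r^{q-1}}+C\,\gamma_r^{(q+1)/q}\,\eta_r^{(q-1)/q},
\]
the residual being $(P\otimes\lambda)$-integrable on $\Omega\times[0,t]$ by H\"older, since $\gamma,\eta\in\mathcal M^2(0,t)$. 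Combined with the Burkholder--Davis--Gundy inequality this controls $E[\sup_{s\le t}Y_s^2]$ in terms of $E[Y_t^2]$ and the finite residual. Finally, for the singular terminal condition (iii) I would use \eqref{low_bound_Y}: writing $E[\int_t^T\eta_s^{-(q-1)}\,ds\,|\,\mathcal F_t]=E[\Phi\,|\,\mathcal F_t]-\int_0^t\eta_s^{-(q-1)}\,ds$ with $\Phi=\int_0^T\eta_s^{-(q-1)}\,ds\in L^1$ by \textbf{(I1)}, martingale convergence shows this tends to $0$ a.s.\ as $t\nearrow T$, so the lower bound blows up; evaluating along rationals and invoking path-continuity of $Y$ on $[0,T)$ gives $\liminf_{t\nearrow T}Y_t=\infty$ a.s.

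The step I expect to be the main obstacle is the uniform-in-$L$ $L^2$ control underlying the two middle steps, namely $\sup_L E[(Y^L_t)^2]<\infty$ and the finiteness of $E\int_0^t Y_s\gamma_s\,ds$ up to the singular time. The crude bound from \eqref{bounds_YL} is only $L^1$ in the $\gamma$-contribution, because near $T$ condition \textbf{(I2)} controls $\gamma$ only through the first power of the weight $(T-s)^p$; the required $L^2$ estimate therefore cannot be read off directly and must be extracted from a weighted a priori estimate that exploits the dissipativity of the generator, in the spirit of Popier's construction, crucially relying on the local hypothesis $\gamma\in\mathcal M^2(0,t)$ for every $t<T$.
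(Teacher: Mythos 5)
Your skeleton coincides with the paper's: monotonicity in $L$ via the comparison theorem, the $L$-independent upper bound from \eqref{bounds_YL} to define $Y_t$ as an increasing a.s.\ limit, passage to the limit in the lower bound to get \eqref{low_bound_Y}, It\^o's formula applied to $(Y^{L'}-Y^{L})^2$ to obtain the Cauchy property of $(Z^L)$ in $\mathcal M^2(0,t)$, and the continuity of Brownian martingales to deduce $\liminf_{t\nearrow T}Y_t=\infty$ from \eqref{low_bound_Y}. Two local deviations are worth recording. For the source term $E\int_0^t(Y^{L'}_r-Y^{L}_r)(\gamma_r\wedge L'-\gamma_r\wedge L)\,dr$, the paper does not dominate it by $E\int_0^t Y_r\gamma_r\,dr$; it decouples the two factors with Young's inequality, absorbs $\tfrac1{4C_2}E[\sup_{s\le t}(Y^{L'}_s-Y^{L}_s)^2]$ into the left-hand side of the Burkholder--Davis--Gundy estimate, and is left with $E\bigl[\bigl(\int_0^t|\gamma_r\wedge L'-\gamma_r\wedge L|\,dr\bigr)^2\bigr]$, which vanishes because $\gamma\in\mathcal M^2(0,t)$. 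This device is precisely what makes the integral $E\int_0^t Y_r\gamma_r\,dr$ unnecessary. For property (ii), the paper does not run your dissipative a priori estimate on $Y^2$; it reads $E[\sup_{s\le t}Y_s^2]<\infty$ off the same computation, namely inequality \eqref{up_bound_diffYL} with the smaller index set to $0$.

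The step you yourself flag as the main obstacle is the one place where your write-up is not yet a proof. Your route needs $E\int_0^t Y_r\gamma_r\,dr<\infty$ for the dominated-convergence argument, and in any case both routes need $E[(Y^{L'}_t-Y^{L}_t)^2]\to0$ for the right-hand side of the Cauchy estimate to vanish. The paper settles the latter in one line: it asserts that the random variable on the right of \eqref{up_boundY} is square integrable under $\bf{(I1)}$ and $\bf{(I2)}$ and invokes dominated convergence in $L^2(\Omega)$. You are right that the $\gamma$-contribution $\int_t^T(T-s)^p\gamma_s\,ds$ is controlled by $\bf{(I2)}$ only in $L^1$, so this assertion is the delicate point; but the paper does not introduce the ``weighted a priori estimate exploiting dissipativity'' you gesture at, and your proposal does not supply one either. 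As written, your convergence argument for $(Z^L)$ rests on an unproved integrability statement. Adopting the paper's Young-inequality decoupling removes the need for $E\int_0^t Y_r\gamma_r\,dr$, but you must still justify the $L^2$-convergence of $Y^L_t$, either by substantiating the square integrability of the bound in \eqref{up_boundY} or by an independent estimate; until that is done the step remains a gap rather than merely an anticipated difficulty.
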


\begin{proof}
The proof is partly a generalization of the arguments in \cite{popier2006} to our setting. Appealing to the comparison theorem \cite[Theorem 2.4]{pardoux1999bsdes} yields that $Y^L\le Y^N$ if $N>L$ (Observe that although assumption (ii) of \cite[Theorem 2.4]{pardoux1999bsdes} is not satisfied here, the comparison holds, since the process $\alpha_t$ from the proof is non-positive here as well). By Equation \eqref{bounds_YL} for fixed $t<T$ the family of random variables $(Y_t^L,{L\ge 0})$ is bounded from above as follows
\ben\label{up_boundY}
Y_t^L\le \frac 1{(T-t)^p}E\left[\int_t^T (\eta_s  + (T-s)^p \gamma_s) ds|\mathcal F_t\right].
\een
Hence, for all $t<T$ we can define $Y_t$ as the increasing limit of $Y_t^L$ as $L\to \infty$. Notice that by Conditions $\bf{(I1)}$ and $\bf{(I2)}$ the random  variable on the RHS of \eqref{up_boundY} is square integrable. By dominated convergence, therefore, $Y^L_t$ converges to $Y_t$ in $L^2(\Omega)$.

Taking the limit $L\nearrow \infty$ in the lower bound for $Y^L$ from Inequality \eqref{bounds_YL} yields that $Y$ satisfies \eqref{low_bound_Y}. We write $E\left[\int_t^T\frac{1}{\eta_s^{q-1}} ds|\mathcal F_t\right]=M_t-A_t$ with $M_t=E\left[\int_0^T\frac{1}{\eta_s^{q-1}} ds|\mathcal F_t\right]$ and $A_t=\int_0^t\frac{1}{\eta_s^{q-1}} ds$. Since $(\mathcal F_t)$ is a Brownian filtration the martingale $M$ is continuous. This implies
\be
\lim_{t\nearrow T}E\left[\int_t^T\frac{1}{\eta_s^{q-1}} ds\big|\mathcal F_t\right]=\lim_{t\nearrow T} (M_t-A_t) =M_T-A_T=0.
\ee
Hence, it follows from \eqref{low_bound_Y} that $Y$ satisfies the singular terminal condition $\liminf_{t\nearrow T}Y_t=\infty$.

For the convergence of $(Z^L)$ let $0\le s\le t<T$. Then It\^o's formula  implies, for $N, L \ge 0$,
\begin{equation}
 \begin{split}
  (Y_s^N-Y_s^L)^2+\int_s^t |Z^N_r-Z^L_r|^2 d r &= (Y_t^N-Y_t^L)^2-2\int_s^t(Y^N_r-Y^L_r)(Z_r^N-Z_r^L) d W_r \\
& \quad + 2 \int_s^t (Y_r^N-Y_r^L)(f^N(r,Y_r^N)-f^L(r,Y_r^L)) d r. 
 \end{split}
\end{equation}
The monotonicity of the driver $f^L(r,y)=-(p-1)\frac {y^q}{\eta_t^{q-1}} + (\gamma_r \wedge L)$ in $y$ yields for $y, y' \ge 0$
\be
(y-y')(f^N(r,y)-f^L(r,y')) \le (y-y')(f^N(r,y)-f^L(r,y)) = (y-y')(\gamma_r \wedge N - \gamma_r \wedge L),
\ee
and hence
\begin{equation}\label{est_approx}
 \begin{split}
  (Y_s^N-Y_s^L)^2+\int_s^t |Z^N_r-Z^L_r|^2 d r
&\le (Y_t^N-Y_t^L)^2-2\int_s^t(Y^N_r-Y^L_r)(Z_r^N-Z_r^L) d W_r \\
&  \quad + 2 \int_s^t (Y_r^N-Y_r^L)(\gamma_r\wedge N - \gamma_r \wedge L) d r.
 \end{split}
\end{equation}
Since $Y^L$ and $Y^N$ are bounded and $Z^L ,Z^N\in \mathcal M^2(0,T)$, we have
\be
E\left[\int_s^t(Y^N_r-Y^L_r)(Z_r^N-Z_r^L) d W_r\right]=0.
\ee
Then estimate \eqref{est_approx} implies
\ben\label{up_bound_controls}
E\left[\int_0^t |Z_r^N-Z^L_r|^2 dr\right]\le E\left[(Y^N_t-Y^L_t)^2] + 2 \int_s^t (Y_r^N-Y_r^L)(\gamma_r\wedge N - \gamma_r \wedge L) dr\right],
\een
and for a constant $C_1$
\ben\label{esti sup}
E\left[\sup_{0\le s\le t}(Y_s^N-Y^L_s)^2\right]
&\le& E[(Y^N_t-Y^L_t)^2]+C_1E\left[\sqrt{\int_0^t(Y^N_r-Y^L_r)^2 |Z_r^N-Z_r^L|^2 dr}\right] \nonumber \\
& & \quad + 2 E\left[\int_0^t (Y_r^N-Y_r^L)(\gamma_r\wedge N - \gamma_r \wedge L) dr\right],
\een
where we used the Burkholder-Davis-Gundy inequality. From Young's inequality we derive
\be
E\left[\sqrt{\int_0^t(Y^N_r-Y^L_r)^2 |Z_r^N-Z_r^L|^2 dr}\right]&\le& E\left[\sup_{0\le s\le t}|Y_s^N-Y^L_s|\sqrt{\int_0^t |Z_r^N-Z_r^L|^2 dr}\right] \\
&\le& \frac1{4C_1}E\left[\sup_{0\le s\le t}(Y_s^N-Y^L_s)^2\right]+C_1E\left[\int_0^t |Z_r^N-Z^L_r|^2 dr\right],
\ee
which implies, together with \eqref{esti sup} and \eqref{up_bound_controls},
\be
\frac34 E\left[\sup_{0\le s\le t}(Y_s^N-Y^L_s)^2\right] 
&\le &  C_2 E[(Y^N_t-Y^L_t)^2] \\
& & + 2 C_2 E\left[\int_0^t (Y_r^N-Y_r^L)(\gamma_r\wedge N - \gamma_r \wedge L) dr\right],
\ee
where $C_2 = 1 + C_1^2$. Again with Young's inequality we get
\be
& & E\left[\int_0^t (Y_r^N-Y_r^L)(\gamma_r\wedge N - \gamma_r \wedge L) dr\right] \\
&\le& \frac{1}{4 C_2} E\left[\sup_{0\le s\le t}(Y_s^N-Y^L_s)^2\right] + C_2 E\left[\left(\int_0^t |\gamma_r\wedge N - \gamma_r \wedge L| dr \right)^2 \right].
\ee
Finally we arrive at
\ben\label{up_bound_diffYL}
E\left[\sup_{0\le s\le t}(Y_s^N-Y^L_s)^2\right]
&\le &  C_3 E\left[(Y^N_t-Y^L_t)^2 + \int_0^t (\gamma_r\wedge N - \gamma_r \wedge L)^2 dr\right],
\een
for a constant $C_3 \ge 0$.
The RHS of \eqref{up_bound_diffYL} converges to zero as $N$, $L \to \infty$. In particular, Inequality \eqref{up_bound_controls} implies that $(Z^L)$ is a Cauchy sequence in $\mathcal M^2(0,t)$ and converges to $Z \in \mathcal M^2(0,t)$ for every $t<T$. Moreover, Inequality \eqref{up_bound_diffYL} yields that $E\left[\sup_{0\le s\le t}Y_s^2\right]<\infty$. Finally, taking the limit $L\nearrow \infty$ in
\be
Y^L_s=Y^L_t-\int_s^t \left((p-1)\frac {(Y^L_r)^q}{\eta_r^{q-1}} - \gamma_r\right) dr -\int_s^tZ^L_rdW_r
\ee
implies that $Y$ satisfies \eqref{BSDE1} for every $0\le s\le t<T$.
\end{proof}

\begin{propo}\label{minimality}
 The solution obtained in Theorem \ref{existence_singular} is minimal: If $(Y',Z')$ is another nonnegative solution of \eqref{BSDE1} with singular terminal condition $Y_T=\infty$, then $Y'_t\ge Y_t$ a.s.\ for all $t\in[0,T]$.
\end{propo}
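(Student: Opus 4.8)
The plan is to exploit the construction of $(Y,Z)$ in Theorem \ref{existence_singular} as the increasing limit $Y_t=\lim_{L\to\infty}Y^L_t=\sup_L Y^L_t$. Since the supremum is monotone, it suffices to prove $Y'_t\ge Y^L_t$ a.s.\ for each fixed $L>0$ and $t<T$; letting $L\to\infty$ then gives $Y'_t\ge Y_t$ for all $t$. So I fix $L$ and $t$ and work on the shrinking interval $[t,T-\eps]$, $\eps>0$, where $Y^L$ solves the BSDE with the cut-off driver $f^L$ (terminal value $Y^L_{T-\eps}$) and $Y'$ solves the BSDE with the full driver $f$ (terminal value $Y'_{T-\eps}$). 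Note $f\ge f^L$ because $\gamma\ge\gamma\wedge L$, but the terminal values at $T-\eps$ are not directly comparable, so a bare application of the comparison theorem is not available.

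First I would linearise the difference. Writing $\Delta_s=Y^L_s-Y'_s$ and using the mean-value representation $(Y^L_s)^q-(Y'_s)^q=a_s\Delta_s$ with $a_s=\int_0^1 q(\theta Y^L_s+(1-\theta)Y'_s)^{q-1}\,d\theta\ge 0$ (progressively measurable and nonnegative, as both processes are nonnegative), the difference satisfies on $[t,T-\eps]$ the linear equation $d\Delta_s=(b_s\Delta_s+c_s)\,ds+(Z^L_s-Z'_s)\,dW_s$, where $b_s=(p-1)a_s/\eta_s^{q-1}\ge 0$ and $c_s=\gamma_s-\gamma_s\wedge L\ge 0$. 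Applying It\^o's formula to $D_s\Delta_s$ with $D_s=\exp(-\int_t^s b_u\,du)\in(0,1]$, the drift in $\Delta$ cancels, and since $D\le 1$ and $Z^L-Z'\in\cM^2(t,T-\eps)$ (by Proposition \ref{finite_existence} and Definition \ref{def_sing_BSDE}(ii)) the stochastic integral is a true martingale. Taking conditional expectations yields $\Delta_t=E[D_{T-\eps}\Delta_{T-\eps}-\int_t^{T-\eps}D_s c_s\,ds\mid\cF_t]\le E[D_{T-\eps}\Delta_{T-\eps}\mid\cF_t]$, the inequality using $c_s\ge 0$.

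The crucial input is then the deterministic bound $Y^L_{T-\eps}\le(1+T)L$ from \eqref{bounds_YL} together with the blow-up of $Y'$. From $\Delta_{T-\eps}\le(1+T)L-Y'_{T-\eps}$ and $0<D_{T-\eps}\le 1$ one checks pointwise that $D_{T-\eps}\Delta_{T-\eps}\le(1+T)L\,\mathbf 1_{\{Y'_{T-\eps}<(1+T)L\}}$ (on the complementary event the bracket $(1+T)L-Y'_{T-\eps}$ is nonpositive). Hence $Y^L_t-Y'_t=\Delta_t\le(1+T)L\,P(Y'_{T-\eps}<(1+T)L\mid\cF_t)$ for every $\eps>0$. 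The singular terminal condition $\liminf_{s\nearrow T}Y'_s=\infty$ forces $Y'_{T-\eps}\to\infty$ a.s.\ as $\eps\downarrow 0$, so the indicator tends to $0$ a.s.; taking expectations and using dominated convergence gives $P(Y'_{T-\eps}<(1+T)L)\to 0$, whence the conditional probabilities tend to $0$ in $L^1$ and therefore a.s.\ along a subsequence $\eps_n\downarrow 0$. Since $\Delta_t$ does not depend on $\eps$, passing to this subsequence yields $\Delta_t\le 0$, that is $Y'_t\ge Y^L_t$, and $L\to\infty$ finishes the proof.

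I expect the main obstacle to be precisely this final limit. One cannot match the terminal data of $Y'$ and $Y^L$ at any fixed $T-\eps$, and the discount factor $D_{T-\eps}$ may degenerate to $0$ near $T$ (where $b$ can be large), so a naive estimate runs into a ``$0\cdot\infty$'' indeterminacy. The indicator bound, which decouples the bounded deterministic level $(1+T)L$ from the exploding process $Y'$, is what circumvents this; the only remaining delicate point is upgrading the $L^1$-convergence of the conditional probabilities to an almost sure statement, which is handled by extracting a subsequence.
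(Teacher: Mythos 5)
Your proposal is correct and follows essentially the same route as the paper: fix $L$, linearise the difference between $Y'$ and the approximating solution $Y^L$, represent it via the explicit formula for linear BSDEs (the paper invokes Lemma \ref{LBSDE}, you derive it by applying It\^o to $D_s\Delta_s$), and resolve the terminal-time mismatch using the uniform bound $Y^L\le(1+T)L$ together with the blow-up of $Y'$, before letting $L\to\infty$. The only cosmetic difference is that the paper works with $Y'-Y^L$ and applies Fatou's lemma from below, whereas you work with $Y^L-Y'$ and use the indicator bound with dominated convergence; these are equivalent ways of handling the same limit.
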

\begin{proof}
The proof is an adaptation of \cite[Theorem 7]{popier2007} to our setting.

Fix $L>0$ and let $(Y^L,Z^L)$ denote the solution of \eqref{finite_BSDE} with terminal condition $Y^L_T=L$. Let $(Y',Z')$ be a nonnegative solution of \eqref{BSDE1} in the sense of Definition \ref{def_sing_BSDE}. Set $\Delta_t=Y'_t-Y^L_t$, $\Gamma_t=Z'_t-Z^L_t$ and
\be
\alpha_t=\left\{\begin{aligned} \frac{p-1}{\eta_t^{q-1}}\frac{(Y'_t)^q-(Y^L_t)^q}{Y'_t-Y^L_t}&,& \quad \text{if } \eta_t^{q-1}(Y'_t-Y^L_t)\neq 0 \\
                0&,& \quad \text{else.}
               \end{aligned}\right.
\ee
Note that $\alpha$ is nonnegative. For every $t<T$ the process $(\Delta,\Gamma)$ solves the linear BSDE
\be
d\Delta_s=[\alpha_s\Delta_s - (\gamma_t-L)^+]ds+\Gamma_sdW_s
\ee
on $[0,t]$ with terminal condition $\Delta_t=Y'_t-Y^L_t$. Hence, by Lemma \ref{LBSDE} in the Appendix the solution $\Delta$ admits the explicit representation
\be
\Delta_s=E\left[\Delta_t e^{-\int_s^t\alpha_r dr} + \int_s^t e^{-\int_s^u\alpha_r dr } (\gamma_u-L)^+ du|\mathcal F_s\right].
\ee
Since $Y'$ is nonnegative and $Y^L\le (1+T)L$ by Proposition \ref{finite_existence}, we have $\Delta_t\ge -(1+T)L$. Thus $\Delta_t e^{-\int_s^t\alpha_r dr}$ is bounded from below by $-(1+T)L$ and we can apply Fatou's lemma to obtain
\be
Y'_s-Y^L_s&=&\Delta_s=\liminf_{t\nearrow T} E\left[\Delta_t e^{-\int_s^t\alpha_r dr} + \int_s^t e^{-\int_s^u\alpha_r dr } (\gamma_u-L)^+ du|\mathcal F_s\right] \\
&\ge& E\left[\liminf_{t\nearrow T}\Delta_t e^{-\int_s^t\alpha_r dr}|\mathcal F_s\right]
\ge0.
\ee
Finally, taking the limit $L\nearrow \infty$ yields the claim.
\end{proof}

\section{Optimal Controls}\label{verification}
In this section we first consider a variant of the minimization problem \eqref{min_problem}, where we omit the constraint $x_T=0$ in the set of admissible controls but penalize any nonzero terminal state by $L|x_T|^p$. We show that optimal controls for this unconstrained minimization problem admit a representation in terms of the solutions $Y^L$ from Proposition \ref{finite_existence}. We then use this result to derive an optimal control for \eqref{min_problem}.

Throughout this section we assume $\bf{ (I1)}$ and $\bf{(I2)}$ without further mentioning it.
\subsection{Penalization}\label{penalization}
In this section we consider the unconstrained minimization problem
\ben\label{penprob}
v^L=\inf_{x\in \mathcal A} J^L(x)=\inf_{x\in \mathcal A} E\left[\int_0^T \big(\eta_t\lvert \dot x_t \rvert^p +(\gamma_t\wedge L)|x_t|^p\big)dt+L\lvert x_T \rvert^p\right]
\een
for some $L>0$, where we take the infimum over $\mathcal A$, the set of all progressively measurable processes $x:\Omega \times [0,T]\to \IR$ with absolutely continuous sample paths starting in $x_0=\xi$. Next, we show how to obtain a minimizing control for \eqref{penprob} from the solution $Y^L$ to \eqref{finite_BSDE}.
\begin{propo}\label{penverification}
 Let $(Y^L,Z^L)$ be the solution to \eqref{finite_BSDE} from Proposition \ref{finite_existence}. Then
\be
x^L_t=\xi e^{-\int_0^t\left(\frac {Y^L_s}{\eta_s}\right)^{q-1}ds}
\ee
is optimal in \eqref{penprob} and we have $v^L=Y^L_0|\xi|^p$.
\end{propo}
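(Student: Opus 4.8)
The plan is to run a verification argument based on the integration-by-parts formula applied to the product $Y^L_t|x_t|^p$, where $(Y^L,Z^L)$ is the solution of \eqref{finite_BSDE} supplied by Proposition \ref{finite_existence}. Fix an arbitrary $x\in\cA$; I may assume $J^L(x)<\infty$, since otherwise the bound $J^L(x)\ge Y^L_0|\xi|^p$ is trivial. Writing $g(z)=|z|^p$, the map $t\mapsto|x_t|^p$ is absolutely continuous (because $p>1$ makes $g$ locally Lipschitz and $x$ is absolutely continuous) with $d|x_t|^p=g'(x_t)\dot x_t\,dt$; being continuous and of finite variation, it contributes no covariation term. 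Hence the product rule combined with the dynamics \eqref{finite_BSDE}, together with $Y^L_T=L$ and $x_0=\xi$, yields the pathwise identity
\[
L|x_T|^p+\int_0^T\big(\eta_t|\dot x_t|^p+(\gamma_t\wedge L)|x_t|^p\big)\,dt=Y^L_0|\xi|^p+\int_0^TF_t\,dt+\int_0^T|x_t|^pZ^L_t\,dW_t,
\]
where $F_t=\eta_t|\dot x_t|^p+(p-1)\frac{(Y^L_t)^q}{\eta_t^{q-1}}|x_t|^p+Y^L_t\,g'(x_t)\dot x_t$.

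The conceptual heart of the argument is the pointwise inequality $F_t\ge0$. First I would estimate $Y^L_t g'(x_t)\dot x_t=pY^L_t|x_t|^{p-1}\sgn(x_t)\dot x_t\ge-pY^L_t|x_t|^{p-1}|\dot x_t|$, and then apply Young's inequality $ab\le\frac{a^p}{p}+\frac{b^q}{q}$ with $a=(p\eta_t)^{1/p}|\dot x_t|$ and $b=p^{1/q}Y^L_t|x_t|^{p-1}\eta_t^{-1/p}$ (equivalently, the elementary inequality $(p-1)Y^q-pa^{q-1}Y+a^q\ge0$ already used in the proof of Proposition \ref{finite_existence}), using the identities $p/q=p-1$ and $(p-1)(q-1)=1$. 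This gives $\eta_t|\dot x_t|^p+(p-1)\frac{(Y^L_t)^q}{\eta_t^{q-1}}|x_t|^p\ge pY^L_t|x_t|^{p-1}|\dot x_t|$, hence $F_t\ge0$. Moreover, equality in Young's inequality holds exactly when $\dot x_t=-\big(\frac{Y^L_t}{\eta_t}\big)^{q-1}x_t$, i.e.\ precisely for the feedback control \eqref{ode_of_x} whose solution is $x^L$.

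With $F_t\ge0$ in hand I would take expectations. Assuming the stochastic integral $\int_0^\cdot|x_t|^pZ^L_t\,dW_t$ has vanishing expectation, the identity gives $J^L(x)=Y^L_0|\xi|^p+E\!\int_0^TF_t\,dt\ge Y^L_0|\xi|^p$. For the candidate $x=x^L$ one has $F_t\equiv0$, so the identity collapses to $J^L(x^L)=Y^L_0|\xi|^p$; here $x^L\in\cA$ because $Y^L$ is bounded by \eqref{bounds_YL} and $1/\eta^{q-1}\in\cM^1(0,T)$ by $\mathbf{(I1)}$, so the defining ODE has an absolutely continuous solution of finite cost. Combining the lower bound with the attained value yields $v^L=Y^L_0|\xi|^p$ and the optimality of $x^L$.

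I expect the vanishing of the stochastic integral to be the main obstacle: for a general admissible $x$ there is no a priori control on $E\!\int_0^T|x_t|^{2p}|Z^L_t|^2\,dt$, so $\int_0^\cdot|x_t|^pZ^L_t\,dW_t$ is only a local martingale and one cannot simply drop it. I would handle this by localizing along stopping times $\tau_n\uparrow T$, for which the integral on $[0,\tau_n]$ is a true martingale, giving the identity in expectation on $[0,\tau_n]$. Passing $n\to\infty$, the two cost integrals converge by monotone convergence (their integrands are nonnegative) and $E\!\int_0^{\tau_n}F_t\,dt\to E\!\int_0^TF_t\,dt$; the delicate point is the boundary term $E[Y^L_{\tau_n}|x_{\tau_n}|^p]\to E[L|x_T|^p]$, which I would secure via uniform integrability, using $0\le Y^L\le(1+T)L$ together with the bound $\sup_t|x_t|\le|\xi|+\int_0^T|\dot x_s|\,ds$ and a Hölder estimate $\int_0^T|\dot x_s|\,ds\le\big(\int_0^T\eta_s|\dot x_s|^p\,ds\big)^{1/p}\big(\int_0^T\eta_s^{-(q-1)}\,ds\big)^{1/q}$, whose factors are controlled by $J^L(x)<\infty$ and $\mathbf{(I1)}$.
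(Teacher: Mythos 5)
Your verification strategy --- applying integration by parts to $Y^L_t|x_t|^p$ and isolating a nonnegative remainder $F_t$ via Young's inequality, with equality characterizing the feedback control --- is sound and genuinely different from the paper's argument, which instead shows that $M_t=pY^L_t(x^L_t)^{p-1}+p\int_0^t(\gamma_s\wedge L)(x^L_s)^{p-1}ds$ is a martingale and runs a first-order, convexity-based comparison between $x^L$ and a competitor along the difference $\theta=x^L-x$. Your pathwise identity, the inequality $F_t\ge 0$, the equality case, and the computation $J^L(x^L)=Y^L_0|\xi|^p$ (where the stochastic integral is a true martingale because $|x^L_t|\le|\xi|$ and $Z^L\in\mathcal{M}^2(0,T)$) are all correct.

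The gap is in the final uniform-integrability step for the boundary terms $Y^L_{\tau_n}|x_{\tau_n}|^p$ when $x\in\mathcal{A}$ is a \emph{general} competitor. You dominate $\sup_t|x_t|^p$ by $\bigl(|\xi|+A^{1/p}B^{1/q}\bigr)^p\le 2^{p-1}\bigl(|\xi|^p+AB^{p-1}\bigr)$ with $A=\int_0^T\eta_s|\dot x_s|^p\,ds$ and $B=\int_0^T\eta_s^{-(q-1)}\,ds$, but $E[A]<\infty$ (from $J^L(x)<\infty$) and $E[B]<\infty$ (from $\mathbf{(I1)}$) do \emph{not} imply $E[AB^{p-1}]<\infty$: the two factors are correlated and no higher moments are assumed, and one can build admissible controls of finite cost (an excursion of height $G$ on a time interval where $\eta$ is small precisely when $G$ is large, with $\gamma=0$) for which $E[\sup_t|x_t|^p]=\infty$. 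This matters because without uniform integrability Fatou only gives $\liminf_nE\bigl[Y^L_{\tau_n}|x_{\tau_n}|^p\bigr]\ge E\bigl[L|x_T|^p\bigr]$, whereas the lower bound $J^L(x)\ge Y^L_0|\xi|^p$ requires the reverse inequality $\limsup_nE\bigl[Y^L_{\tau_n}|x_{\tau_n}|^p\bigr]\le E\bigl[L|x_T|^p\bigr]$. The fix is the one the paper uses: first reduce to non-increasing, nonnegative competitors by the argument of Lemma \ref{nonincr_strategies}, which applies verbatim to $J^L$ because the running-minimum modification also decreases the terminal penalty $L|x_T|^p$; then $|x_t|\le|\xi|$, the boundary terms are dominated by $(1+T)L|\xi|^p$, and your localization closes.
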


\begin{proof}
To simplify notation we assume $\xi = 1$ and set $\gamma^L_t=\gamma_t\wedge L$. Let $g(z)=\lvert z \rvert ^p$ and $M_t=pY^L_t(x_t^L)^{p-1}+p\int_0^t\gamma^L_s(x^L_s)^{p-1}dt$.
Applying the integration by parts formula to $M$ results in
\be
dM_t&=&p(x_t^L)^{p-1}dY^L_t+p(p-1)Y^L_t(x_t^L)^{p-2}dx_t^L+p\gamma^L_t(x_t^L)^{p-1}dt\\
&=&p(x_t^L)^{p-1}Z^L_tdW_t.
\ee
Since $x^L$ is bounded and $Z^L\in \mathcal M^2(0,T)$, the process $M$ is a martingale.
Let $x\in \mathcal A$ and introduce $\theta_t=x^L_t-x_t$. Then $\theta$ satisfies $\theta_0=0$. Similar considerations as in Lemma \ref{nonincr_strategies} imply that we can assume that $x$ is pathwise non-increasing and hence $|\theta_t|\le 2$. Furthermore, we have $\eta_tg'(\dot x^L_t)=-p\eta_t|\dot x^L_t|^{p-1}=-pY^L_t(x_t^L)^{p-1}$. The convexity of $g$ implies for all $t\in[0,T]$
\be
g(\dot x^L_t)-g(\dot x_t)\le g'(\dot x^L_t)(\dot x^L_t-\dot x_t).
\ee
Thus, it follows from integration by parts
\be
\int_0^T\eta_t(g(\dot x^L_t)-g(\dot x_t))dt&\le& \int_0^T\eta_tg'(\dot x^L_t)d\theta_t=\int_0^T\left(\int_0^tp\gamma^L_sx^{p-1}_sds-M_t\right)d\theta_t\\
&=&-Lg'(x^L_T)\theta_T+\int_0^T\theta_tdM_t-\int_0^T\gamma^L_tg'(x_t)\theta_tdt
\ee
Since $M$ is a square integrable martingale, we obtain $E\left[\int_0^T\theta_tdM_t\right]=0$. Using convexity of $g$ once more, we obtain
\be
g(x^L_t)-g(x_t)\le g'(x^L_t)(x^L_t-x_t).
\ee
This implies optimality of $x^L$:
\be
E\left[\int_0^T\eta_t(g(\dot x^L_t)-g(\dot x_t))dt\right]&\le& -E\left[Lg'(x^L_T)\theta_T+\int_0^T\gamma^L_tg'(x_t)\theta_tdt\right]\\
&\le& -E\left[L(g(x^L_T)-g(x_T))+\int_0^T\gamma_t^L(g(x^L_t)-g(x_t))dt\right].
\ee
 It remains to verify the identity $v^L=Y^L_0$. To this end we apply the integration by parts formula to the process $Y(x^L)^p$ to obtain
\be
d(Y(x^L)^p)_t=-\left(\frac {(Y_t^L)^q}{\eta_t^{q-1}}(x_t^L)^p+\gamma^L_t(x_t^L)^p\right)dt+(x_t^L)^pZ^L_tdW_t.
\ee
Moreover we have
\be
\lvert \dot x^L_t \rvert^p=\left(\left(\frac {Y_t^L}{\eta_t}\right)^{q-1}x^L_t\right)^p=\left(\frac{Y^L_t}{\eta_t}\right)^q(x_t^L)^p.
\ee
Thus we obtain
\be
Y_0^L=E\left[\int_0^T\eta_t|\dot x^L_t|^p+\gamma^L_t|x^L_t|^pdt\right]=J^L(x^L)=v^L.
\ee
\end{proof}

\subsection{The constrained case}\label{constrained}

We now turn to the constrained case and prove Theorem \ref{sec main}. For the reader's convenience we restate the theorem here.
\begin{theo}\label{opt_strat_constr}
 Let $(Y,Z)$ be the minimal solution to \eqref{BSDE1} with singular terminal condition $Y_T=\infty$ from Theorem \ref{existence_singular}. Then $v = Y_0 |\xi|^p$; moreover the control $x_t=\xi \exp\left(-\int_0^t\left(\frac{Y_s}{\eta_s}\right)^{q-1} ds\right)$ belongs to $\mathcal A_0$ and is optimal in \eqref{min_problem}.
\end{theo}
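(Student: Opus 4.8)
The plan is to obtain Theorem \ref{opt_strat_constr} as the $L\to\infty$ limit of the penalized problem solved in Proposition \ref{penverification}. By the homogeneity $J(\xi\,\cdot)=|\xi|^pJ(\cdot)$ and $v(\xi)=|\xi|^pv(1)$ (the case $\xi=0$ being trivial), it suffices to treat $\xi=1$; I abbreviate $S_t=\int_0^t(Y_s/\eta_s)^{q-1}\,ds$, so that the candidate is $x_t=e^{-S_t}$. First I would check $x\in\mathcal A_0$. For $t<T$ the bound $E[\sup_{s\le t}Y_s^2]<\infty$ from Theorem \ref{existence_singular} gives $\sup_{s\le t}Y_s<\infty$ a.s., whence $S_t\le(\sup_{s\le t}Y_s)^{q-1}\int_0^t\eta_s^{-(q-1)}\,ds<\infty$ by \textbf{(I1)}; thus $x$ is well defined, progressively measurable and absolutely continuous on $[0,t]$. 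The crucial point is the terminal constraint $x_T=0$, which is exactly where the singular terminal condition must be used. Here I exploit the penalization: writing $x^L_t=e^{-\int_0^t(Y^L_s/\eta_s)^{q-1}ds}$ and using $Y^L\le Y$ (Theorem \ref{existence_singular}) we get $0\le x_T\le x^L_T$ pointwise, while Proposition \ref{penverification} together with $Y^L_0\le Y_0<\infty$ (finite by the upper bound in Proposition \ref{finite_existence} and \textbf{(I1)}, \textbf{(I2)}) yields $L\,E[(x^L_T)^p]\le J^L(x^L)=v^L=Y^L_0\le Y_0$, so $E[(x^L_T)^p]\le Y_0/L$. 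Letting $L\to\infty$ forces $E[(x_T)^p]=0$, i.e.\ $x_T=0$ a.s.; absolute continuity on all of $[0,T]$ then follows from $\int_0^T|\dot x_s|\,ds=\int_0^Te^{-S_s}\,dS_s=1-e^{-S_T}=1$. Hence $x\in\mathcal A_0$.

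Next I would show $J(x)\le Y_0$. Applying the integration by parts formula to $Y_tx_t^p$ on $[0,T']$ for $T'<T$, and using $dx_t=-(Y_t/\eta_t)^{q-1}x_t\,dt$ together with the BSDE \eqref{BSDE1}, the drift simplifies (via $p(q-1)=q$) to
\[
d(Yx^p)_t=-\Big(\tfrac{Y_t^q}{\eta_t^{q-1}}+\gamma_t\Big)x_t^p\,dt+x_t^pZ_t\,dW_t,
\]
while $\eta_t|\dot x_t|^p=\tfrac{Y_t^q}{\eta_t^{q-1}}x_t^p$. Since $x^p\le 1$ is bounded and $Z\in\mathcal M^2(0,T')$, the stochastic integral is a true martingale, so taking expectations gives
\[
E\Big[\int_0^{T'}\big(\eta_t|\dot x_t|^p+\gamma_tx_t^p\big)\,dt\Big]=Y_0-E[Y_{T'}x_{T'}^p]\le Y_0,
\]
the inequality because $Y_{T'}x_{T'}^p\ge 0$. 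Letting $T'\nearrow T$ and invoking monotone convergence on the nonnegative integrand yields $J(x)\le Y_0$.

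Finally, for the matching lower bound $v\ge Y_0$ I would compare with the penalized value: every $x\in\mathcal A_0$ is admissible for \eqref{penprob} and satisfies $x_T=0$ and $\gamma\wedge L\le\gamma$, so $v^L\le J^L(x)\le J(x)$; taking the infimum over $\mathcal A_0$ and then $L\to\infty$ gives $Y_0=\lim_L Y^L_0=\lim_L v^L\le v$. Combining the three estimates, $Y_0\le v\le J(x)\le Y_0$, proves $v=Y_0=J(x)$, so $x$ is optimal; the general case follows on multiplying by $|\xi|^p$.

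The main obstacle is the terminal constraint $x_T=0$, which is the probabilistic shadow of the singular terminal condition. The key simplification I rely on is twofold: the penalization bound $E[(x^L_T)^p]\le Y_0/L$ pins $x_T$ to zero \emph{without} requiring any explicit blow-up rate for $Y$ near $T$, and in the upper estimate one only needs the boundary term $E[Y_{T'}x_{T'}^p]$ to be \emph{nonnegative} rather than to vanish, so that no delicate control of $Y_{T'}x_{T'}^p$ as $T'\nearrow T$ is needed; its vanishing then emerges a posteriori from $J(x)=Y_0$.
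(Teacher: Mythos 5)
Your proposal is correct and follows the paper's overall architecture: the sandwich $Y_0 \ge J(x) \ge v \ge Y_0$, with the upper bound from It\^o's formula applied to $Yx^p$ (discarding the nonnegative boundary term $E[Y_{T'}x_{T'}^p]$ and using monotone convergence) and the lower bound from $v \ge v^L = Y^L_0 \to Y_0$. The one place where you genuinely diverge is the verification of the terminal constraint $x_T=0$, which is exactly where the singularity must act. The paper argues via the process $M_t = pY_tx_t^{p-1}+p\int_0^t\gamma_sx_s^{p-1}\,ds$: integration by parts gives $dM_t = x_t^{p-1}Z_t\,dW_t$, so $M$ is a nonnegative local martingale on $[0,T)$, hence a supermartingale converging a.s.\ as $t\nearrow T$, and then $0\le x_t\le \left(M_t/(pY_t)\right)^{q-1}\to 0$ because $\liminf_{t\nearrow T}Y_t=\infty$. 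You instead squeeze $0\le x_T\le x^L_T$ (from $Y^L\le Y$) and extract $E[(x^L_T)^p]\le Y_0/L$ from the identity $J^L(x^L)=v^L=Y^L_0$ of Proposition \ref{penverification}. Both arguments are valid. Yours avoids the local-martingale/supermartingale convergence step entirely and uses only the construction of $Y$ as the increasing limit of the $Y^L$ together with $Y_0<\infty$ (it does not even invoke condition (iii) of Definition \ref{def_sing_BSDE} directly); the price is that it is tied to the approximating sequence, whereas the paper's argument works for the minimal solution as such and shows more transparently how the blow-up of $Y$ drives the position to zero. The remaining steps (reduction to $\xi=1$ by homogeneity, the drift computation for $d(Yx^p)_t$, and the inequality $J^L\le J$ on $\mathcal A_0$) coincide with the paper.
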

\begin{proof}
To simplify notation assume that $\xi = 1$. As in the proof of Proposition \ref{penverification} we introduce $M_t=pY_tx_t^{p-1}+p\int_0^t\gamma_sx_s^{p-1}dt$. Performing integration by parts yields
\be
dM_t=x_t^{p-1}Z_tdW_t.
\ee
Hence, $M$ is a nonnegative local martingale on $[0,T)$ and in particular a nonnegative super-martingale. Thus it converges almost surely in $\IR$ as $t\nearrow T$. Since $Y$ satisfies the terminal condition $\liminf_{t\nearrow T}Y_t=\infty$ we have that
\be
0 \le x_t=\left(\frac{M_t-p\int_0^t\gamma_sx_s^{p-1} ds}{pY_t}\right)^{q-1} \le \left(\frac{M_t}{pY_t}\right)^{q-1}\to 0
\ee
 a.s.\ for $t\nearrow T$. It follows that $x\in \mathcal A_0$.

Next we apply the integration by parts formula to the process $Yx^p$ to obtain
\be
d(Yx^p)_t=-\left( \eta_t|\dot x_t|^p+\gamma_tx_t^p\right)dt+x_t^pZ_tdW_t.
\ee
Since $Z\in \mathcal M^2(0,t)$ and $|x_t|\le 1$ we can deduce for $t<T$
\be
Y_0=E\left[\int_0^t \big(\eta_s\lvert \dot x_s\rvert ^p +\gamma_sx_s^p\big)ds\right]+E\left[Y_tx_t^p\right]\ge E\left[\int_0^t\big(\eta_s\lvert \dot x_s\rvert ^p+\gamma_sx_s^p \big)ds\right].
\ee
Taking the limit $t\nearrow T$ and appealing to monotone convergence theorem yields
\ben\label{upboundY_0}
Y_0\ge E\left[\int_0^T\big(\eta_s\lvert \dot x_s\rvert ^p +\gamma_sx_s^p \big)ds\right] = J(x).
\een
Next, note that for every $\overline x \in \mathcal A_0$ we have $J(\overline x)\ge J^L(\overline x)$. This implies $v\ge v^L$ for every $L>0$. By Proposition \ref{penverification} we have $Y_0^L=v^L$. Minimality of $Y$ implies $Y_0=\lim_{L\nearrow \infty}Y^L_0=\lim_{L\nearrow \infty}v^L\le v$.  Consequently we obtain with Equation \eqref{upboundY_0}
\be
Y_0\ge J(x) \ge v \ge Y_0
\ee
 and thus optimality of $x$.
\end{proof}
\begin{remark}\label{general_inistate}
The solution $Y$ from Theorem \ref{existence_singular} does not only lead to optimal controls in the case where the liquidation period begins at time $t=0$  and the initial position position is equal to $x_0=1$ but also for general initial states.
  Let $x\in \mathcal A_0$ denote the optimal control from Theorem \ref{opt_strat_constr}. For a general initial position $\xi\in \IR$ the homogeneity of $z\mapsto |z|^p$ implies that the process $t\mapsto \xi x_t$ minimizes the functional $E\left[\int_0^T \big(\eta_t|\dot {\tilde x}_t|^p+\gamma_t|{\tilde x}_t|^p\big) dt\right]$ over all progressively measurable processes $\tilde x$ with absolutely continuous paths starting in $x_0$ and ending in $0$. The value of this minimization problem is then given by $Y_0|x_0|^p$.
If liquidation starts at an arbitrary time $t<T$ the minimization problem reads
\be
V_t=\inf E\left[\int_t^T(\eta_s\lvert \dot {\tilde x}_s \rvert^p +\gamma_s|{\tilde x}_s|^p\big)ds\bigg|\mathcal F_t\right],
\ee
where the infimum is taken over all progressively measurable processes $\tilde x$ starting in a $\mathcal F_t$-measurable random variable $\xi$ and ending in $0$. In this case the optimal control is given by
\be
x_s=\xi \exp\left(-\int_t^s\left(\frac{Y_r}{\eta_r}\right)^{q-1} dr\right)
\ee
and the value is equal to $V_t=Y_t|\xi|^p$.
\end{remark}


In the next proposition we state an integrability condition that allows to identify the minimal solution of \eqref{BSDE1}.
\begin{propo}\label{suff_cond_minimality}
 Let $(Y,Z)$ be a nonnegative solution of \eqref{BSDE1} with singular terminal condition $Y_T=\infty$. Let $x_t=\exp\left(-\int_0^t\left(\frac{Y_s}{\eta_s}\right)^{q-1} ds\right)$ denote the associated position path and assume that $x^{p-1}Z\in \mathcal M^2(0,T)$. Then $Y$ is the minimal solution of \eqref{BSDE1}.
\end{propo}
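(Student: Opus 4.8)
The plan is to show that the given solution coincides with the minimal solution $(Y^{\min},Z^{\min})$ constructed in Theorem \ref{existence_singular}. Since $(Y,Z)$ is a nonnegative solution of \eqref{BSDE1}, Proposition \ref{minimality} already gives $Y_t\ge Y^{\min}_t$ for all $t$, so it suffices to establish the reverse inequality; I would do this by first proving $Y_0=Y^{\min}_0$ and then upgrading the equality to the whole interval. The bridge to $Y^{\min}_0$ is the value $v$ of the control problem: recall from Theorem \ref{opt_strat_constr} that $Y^{\min}_0=v$. The strategy is therefore to show that the control $x$ associated with $Y$ is admissible and optimal, and that its cost equals $Y_0$.

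First I would check that $x\in\mathcal{D}_0\subset\mathcal{A}_0$. Exactly as in the proof of Theorem \ref{opt_strat_constr}, integration by parts shows that $M_t=pY_tx_t^{p-1}+p\int_0^t\gamma_sx_s^{p-1}ds$ satisfies $dM_t=px_t^{p-1}Z_t\,dW_t$. The extra hypothesis $x^{p-1}Z\in\mathcal{M}^2(0,T)$ turns the stochastic integral into a square-integrable martingale on the \emph{closed} interval $[0,T]$, so $M$ is a genuine (not merely local) martingale with $E[M_T^2]<\infty$. Since $x$ is nonincreasing with $x_0=1$, and the bound $0\le x_t\le(M_t/(pY_t))^{q-1}$ together with $Y_t\to\infty$ forces $x_t\to0$, we obtain $x_T=0$, hence $x\in\mathcal{D}_0$. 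Now the maximum principle (Proposition \ref{suff_cond}) applies, since its hypothesis is precisely that $M$ is a square-integrable martingale; it yields that $x$ is optimal, i.e.\ $J(x)=v$.

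Next I would identify $Y_0$ with $J(x)$. Integration by parts on $Yx^p$ gives, for every $t<T$, the identity $Y_0=E\big[\int_0^t(\eta_s|\dot x_s|^p+\gamma_sx_s^p)\,ds\big]+E[Y_tx_t^p]$, where the stochastic integral has zero mean because $x^pZ\in\mathcal{M}^2(0,t)$ (here $0\le x\le1$). Letting $t\nearrow T$, the first term increases to $J(x)$ by monotone convergence, so the whole matter reduces to showing $\lim_{t\nearrow T}E[Y_tx_t^p]=0$. This is the hard part and the only place where the global integrability assumption is essential: it guarantees that $M$ is uniformly integrable, and from $0\le Y_tx_t^p=x_t\,Y_tx_t^{p-1}\le M_t/p$ combined with $Y_tx_t^p\to0$ a.s.\ (indeed $x_t\to0$ while $Y_tx_t^{p-1}=(M_t-p\int_0^t\gamma_sx_s^{p-1}ds)/p$ converges to a finite limit) the generalized dominated convergence theorem yields $E[Y_tx_t^p]\to0$. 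Hence $Y_0=J(x)=v=Y^{\min}_0$.

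Finally I would upgrade $Y_0=Y^{\min}_0$ to $Y\equiv Y^{\min}$. Writing $\Delta=Y-Y^{\min}\ge0$ and $\Gamma=Z-Z^{\min}$, subtracting the two copies of \eqref{BSDE1} cancels the $\gamma$-terms and linearizes the power nonlinearity: $d\Delta_t=\alpha_t\Delta_t\,dt+\Gamma_t\,dW_t$ with $\alpha_t=(p-1)\eta_t^{-(q-1)}\big((Y_t)^q-(Y^{\min}_t)^q\big)/\Delta_t\ge0$ (and $\alpha_t=0$ where $\Delta_t=0$); the mean value theorem together with $1/\eta^{q-1}\in\mathcal{M}^1(0,T)$ shows $\int_0^t\alpha_s\,ds<\infty$ on $[0,T)$. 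Then $N_t=\Delta_t\exp(-\int_0^t\alpha_s\,ds)$ is a nonnegative local martingale with $N_0=\Delta_0=0$; as a nonnegative supermartingale it satisfies $0\le E[N_t]\le N_0=0$, forcing $N_t=0$ and hence $\Delta_t=0$ for every $t$. By continuity on $[0,T)$ this gives $Y=Y^{\min}$, so $(Y,Z)$ is the minimal solution.
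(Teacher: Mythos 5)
Your proof is correct, and its core follows the paper's own argument: you form $M_t=pY_tx_t^{p-1}+p\int_0^t\gamma_sx_s^{p-1}ds$, use the hypothesis $x^{p-1}Z\in\mathcal M^2(0,T)$ to make $M$ a true square-integrable martingale so that the maximum principle (Proposition \ref{suff_cond}) gives optimality of $x$, and then identify $Y_0=J(x)=v=Y_0^{\min}$ via integration by parts on $Yx^p$. Two points where you genuinely diverge. First, you justify $\lim_{t\nearrow T}E[Y_tx_t^p]=0$ carefully (domination by the uniformly integrable martingale $M/p$ together with the a.s.\ limit $Y_tx_t^p\to 0$); the paper only asserts the pointwise limit and defers to Theorem \ref{opt_strat_constr}, so your version is the more complete one, and it correctly isolates where the global integrability hypothesis is indispensable. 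Second, to pass from $Y_0=Y_0^{\min}$ to $Y_t=Y_t^{\min}$ for all $t$, the paper appeals to Remark \ref{general_inistate} (interpreting $Y_t^{\min}$ as the time-$t$ value of the liquidation problem and rerunning the argument conditionally), whereas you linearize the difference $\Delta=Y-Y^{\min}\ge 0$ and observe that $N_t=\Delta_t e^{-\int_0^t\alpha_s ds}$ is a nonnegative local martingale started at $0$, hence identically zero. Your route is self-contained and arguably cleaner, since it reuses the same linearization already employed in Proposition \ref{minimality} and avoids re-deriving the conditional verification; it does require checking $\int_0^t\alpha_s\,ds<\infty$ a.s.\ for $t<T$, which you correctly obtain from $E[\sup_{s\le t}Y_s^2]<\infty$ and $1/\eta^{q-1}\in\mathcal M^1(0,T)$.
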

\begin{proof}
Let $Y^{\min}$ denote the minimal solution of \eqref{BSDE1}. Without loss of generality we only consider the point in time $t=0$ and show that $Y_0=Y_0^{\min}$. For general $t<T$ we refer to Remark \ref{general_inistate} which shows that $Y_t^{\min}$ is the value of the liquidation problem starting in time $t$. We proceed as in the proof of Theorem \ref{opt_strat_constr}. Let $M_t=pY_tx_t^{p-1}+p\int_0^t\gamma_sx_s^{p-1}dt$. Then we obtain by integration by parts
\be
dM_t=x_t^{p-1}Z_tdW_t.
\ee
Hence, $M$ is a nonnegative true martingale with $E[M_T^2]<\infty$ and converges a.s.\ in $\IR$ as $t\nearrow T$. Since $Y$ satisfies the terminal condition $\liminf_{t\nearrow T}Y_t=\infty$ we have that $x_t\to 0$ as $t\nearrow T$. Consequently, $x\in \mathcal A_0$ and Lemma \ref{suff_cond} implies optimality of $x$. Again an application of the integration by parts formula yields
\be
d(Yx^p)_t=(\eta_t|\dot x_t|^{p}+\gamma_tx_t^p)dt+x_t^pZ_tdW_t.
\ee
By assumption the process $t\mapsto \int_0^tx_t^pZ_tdW_t$ is a true martingale. Moreover we have $\lim_{t\nearrow T}Y_tx_t^p=0$ and hence Theorem \ref{opt_strat_constr} implies $Y_0=J(x)=v=Y_0^{\min}$.
\end{proof}

\section{Processes with uncorrelated multiplicative increments}\label{ind_incr_section}

In this section we study the special case of the control problem \eqref{min_problem} where $\gamma =0$ and $\eta$ has uncorrelated multiplicative increments. We first give a rigorous definition of what the latter means.

We say that a positive, progressively measurable process $\eta$ has uncorrelated multiplicative increments if $E\left[\frac{\eta_t}{\eta_s}|\mathcal F_s\right]=E\left[\frac{\eta_t}{\eta_s}\right]$ for all $s\le t<T$. We show that it is precisely this class of processes which leads to {\it deterministic} optimal controls for the minimization problem \eqref{min_problem} (with $\gamma = 0$). Moreover we show that if $\eta$ is a martingale, then it is optimal to close the position at a constant rate.

Observe that any process $\eta$ where $\frac {\eta_t}{\eta_s}$ is independent of $\mathcal F_s$ for $s\le t<T$ has uncorrelated multiplicative increments. The converse does not hold true.

In the next lemma we give an equivalent characterization of processes with uncorrelated multiplicative increments.
\begin{lemma}\label{charac_umi}
A positive, progressively measurable process $\eta$ has uncorrelated multiplicative increments if and only if the process $\left(\frac{\eta_t}{E[\eta_t]}\right)_{t<T}$ is a martingale. Any such process satisfies $E\left[\frac{\eta_t}{\eta_s}\right]=\frac{E[\eta_t]}{E[\eta_s]}$ for all $s\le t<T$.
\end{lemma}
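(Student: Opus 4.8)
The plan is to set $m_t = E[\eta_t]$ (which I take to be finite and strictly positive for $t<T$, as is implicit in the statement) and $N_t = \eta_t/m_t$, and to prove the equivalence by running a single algebraic identity in both directions. The identity in question comes from pulling the $\mathcal F_s$-measurable positive factor $\eta_s$ out of a conditional expectation; positivity of $\eta$ is what guarantees that all the quotients and conditional expectations below are well defined.

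First I would show that uncorrelated multiplicative increments imply the martingale property. Fix $s \le t < T$. Since $\eta_s$ is $\mathcal F_s$-measurable and positive,
\[
E[\eta_t \mid \mathcal F_s] = \eta_s\, E\!\left[\tfrac{\eta_t}{\eta_s} \,\middle|\, \mathcal F_s\right],
\]
and the defining property replaces the conditional expectation on the right by the deterministic unconditional one, giving $E[\eta_t \mid \mathcal F_s] = \eta_s\, E[\eta_t/\eta_s]$. Taking expectations here yields $m_t = m_s\, E[\eta_t/\eta_s]$, i.e.\ the auxiliary formula $E[\eta_t/\eta_s] = m_t/m_s$ that the lemma's final sentence asserts; this is the glue that makes everything fit. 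Substituting it back gives $E[\eta_t \mid \mathcal F_s] = \eta_s\, m_t/m_s$, and dividing by $m_t$ shows $E[N_t \mid \mathcal F_s] = \eta_s/m_s = N_s$, so $N$ is a martingale (integrability is immediate, since $E[N_t]=1$).

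For the converse I would reverse these steps. Assuming $N$ is a martingale, $E[\eta_t \mid \mathcal F_s] = m_t\, E[N_t \mid \mathcal F_s] = m_t N_s = m_t\,\eta_s/m_s$; dividing by the $\mathcal F_s$-measurable positive random variable $\eta_s$ gives
\[
E\!\left[\tfrac{\eta_t}{\eta_s} \,\middle|\, \mathcal F_s\right] = \frac{m_t}{m_s},
\]
which is deterministic and therefore coincides with its own expectation $E[\eta_t/\eta_s]$. This is exactly the uncorrelated-increments property, and it simultaneously re-derives $E[\eta_t/\eta_s] = m_t/m_s$, establishing the final formula under either equivalent description.

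The only point requiring care — and the main (mild) obstacle — is integrability: every manipulation presupposes that $m_t=E[\eta_t]$ is finite and positive and that $E[\eta_t/\eta_s]$ is finite, so that the conditional expectations are genuine finite random variables and the ``take out what is known'' rule applies. Under the standing assumption $\eta \in \mathcal M^2(0,T)$ together with positivity of $\eta$ these requirements hold for $t<T$, and I would record them explicitly at the outset; once they are in place, both implications are purely formal.
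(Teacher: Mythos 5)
Your proof is correct and follows essentially the same route as the paper's: pull the $\mathcal F_s$-measurable factor $\eta_s$ out of the conditional expectation, take expectations to obtain $E[\eta_t/\eta_s]=E[\eta_t]/E[\eta_s]$, and in the converse direction observe that $E[\eta_t/\eta_s\mid\mathcal F_s]$ is deterministic and hence equals its own expectation. The explicit remark on integrability is a sensible addition but does not change the argument.
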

\begin{proof}
 Let $\eta$ have uncorrelated multiplicative increments. We first show that for $s\le t<T$ any such $\eta$ satisfies $E\left[\frac{\eta_t}{\eta_s}\right]=\frac{E[\eta_t]}{E[\eta_s]}$. Indeed, we have
\be
E[\eta_t]=E\left[\eta_sE\left[\frac{\eta_t}{\eta_s}\bigg|\mathcal F_s\right]\right]=E[\eta_s]E\left[\frac{\eta_t}{\eta_s}\right].
\ee
Next let $M_t=\frac{\eta_t}{E[\eta_t]}$ for $t<T$. For $s\le t<T$ the process $M$ satisfies
\be
E[M_t|\mathcal F_s]=\frac 1{E[\eta_t]}E[\eta_t|\mathcal F_s]=\frac 1{E[\eta_t]}E\left[\frac{\eta_t}{\eta_s}\eta_s\bigg|\mathcal F_s\right]=\frac{\eta_s}{E[\eta_t]}E\left[\frac{\eta_t}{\eta_s}\right]=M_s.
\ee
For the converse direction, let  $M_t=\frac{\eta_t}{E[\eta_t]}$ be a martingale for $t<T$. Then we have for $s\le t<T$
\be
E[\eta_t|\mathcal F_s]=E[\eta_t]E[M_t|\mathcal F_s]=E[\eta_t]M_s=\frac{E[\eta_t]}{E[\eta_s]}\eta_s.
\ee
Thus the random variable $E\left[\frac{\eta_t}{\eta_s}|\mathcal F_s\right]$ is deterministic, which implies $E\left[\frac{\eta_t}{\eta_s}|\mathcal F_s\right]=E\left[\frac{\eta_t}{\eta_s}\right]$.
\end{proof}
Lemma \ref{charac_umi} implies that any positive martingale has uncorrelated multiplicative increments. Further examples are provided by the following class of diffusions.
\begin{ex}
 Let $\eta$ be a diffusion with linear drift, i.e.\ $\eta$ solves
\be
d\eta_t=\mu(t)\eta_tdt+\sigma(t,\eta_t)dW_t,
\ee
where the drift $\mu$ is a deterministic function of time and the stochastic volatility $\sigma:[0,T]\times \IR\times \Omega\to \IR_+$ is such that $t\mapsto\sigma(t,\eta_t)\in \mathcal M^2(0,T)$. Then the process $\eta  \exp(-\int_0^\cdot\mu(r)dr)$ is a martingale, and hence we have $E\left[\eta_t|\mathcal F_s\right]=\eta_s \exp(\int_s^t\mu(r)dr)$.
This implies that the random variable $E\left[\frac{\eta_t}{\eta_s}|\mathcal F_s\right]$ is deterministic. Therefore $\eta$ has uncorrelated multiplicative increments.
\end{ex}
We first show that if the optimal control from Theorem \ref{opt_strat_constr} is deterministic, then the process $\eta$ has necessarily uncorrelated multiplicative increments.
\begin{propo}\label{nec_umi}
Let $\eta$ be positive, progressively measurable and such that $\eta \in \mathcal M^2(0,T)$, $1/\eta^{q-1} \in \mathcal M^1(0,T)$. Assume that the optimal control $x\in \mathcal A_0$ from Theorem \ref{opt_strat_constr} is deterministic. Then $\eta$ has uncorrelated multiplicative increments.
\end{propo}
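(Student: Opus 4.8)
The plan is to reuse the martingale that drove the verification in Theorem~\ref{opt_strat_constr}, now specialised to $\gamma=0$, and to read off the claimed conditional expectation identity from the fact that the optimal control is deterministic. Throughout I take $\xi=1$ (hence $\xi>0$), which is harmless by the homogeneity observed in Remark~\ref{general_inistate}.

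First I would record the feedback identity $\eta_t|\dot x_t|^{p-1}=Y_tx_t^{p-1}$, which follows from $\dot x_t=-(Y_t/\eta_t)^{q-1}x_t$ together with $(p-1)(q-1)=1$. Setting $M_t=pY_tx_t^{p-1}$ and applying integration by parts to $Yx^{p-1}$ exactly as in the proof of Theorem~\ref{opt_strat_constr} (with $\gamma=0$) gives
\[
dM_t=px_t^{p-1}Z_tdW_t,
\]
so $M$ is a local martingale on $[0,T)$. The essential point is to upgrade this to a genuine martingale on $[0,T)$: since $0\le x_t\le 1$ is bounded and $Z\in\mathcal M^2(0,t)$ for every $t<T$ by part (ii) of Definition~\ref{def_sing_BSDE}, the integrand $x^{p-1}Z$ belongs to $\mathcal M^2(0,t)$, so $\int_0^\cdot x_r^{p-1}Z_rdW_r$ is a true $L^2$-martingale on $[0,t]$ for each $t<T$. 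Hence $E[M_t\mid\mathcal F_s]=M_s$ for all $0\le s\le t<T$.

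Next I would bring in the hypothesis that $x$ is deterministic. Then $t\mapsto|\dot x_t|^{p-1}$ is a deterministic function, and it is strictly positive on $[0,T)$ because $\dot x_t=-(Y_t/\eta_t)^{q-1}x_t<0$, using that $Y_t>0$ by the lower bound~\eqref{low_bound_Y} and $\eta_t>0$. Writing $M_t=p\eta_t|\dot x_t|^{p-1}$, the martingale identity becomes $|\dot x_t|^{p-1}E[\eta_t\mid\mathcal F_s]=|\dot x_s|^{p-1}\eta_s$, so that
\[
E\left[\frac{\eta_t}{\eta_s}\,\Big|\,\mathcal F_s\right]=\frac1{\eta_s}E[\eta_t\mid\mathcal F_s]=\frac{|\dot x_s|^{p-1}}{|\dot x_t|^{p-1}}
\]
for all $0\le s\le t<T$. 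The right-hand side is deterministic and therefore coincides with its own expectation $E[\eta_t/\eta_s]$, which is exactly the defining property of uncorrelated multiplicative increments.

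I expect the only delicate step to be the passage from local to true martingale for $M$ on $[0,T)$; once that is in place the remainder is the algebraic rearrangement above. It is worth emphasising that only the martingale property at times $t<T$ is used, so no analysis of the singular behaviour at the terminal time $T$ is required, and the boundedness of $x$ together with $Z\in\mathcal M^2(0,t)$ for $t<T$ is all that is needed.
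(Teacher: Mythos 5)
Your proof is correct and is essentially the paper's own argument in a different packaging: the paper writes $Y$ as the solution of the linear BSDE $dY_t=(p-1)\alpha_tY_t\,dt+Z_t\,dW_t$ with deterministic $\alpha_t=(Y_t/\eta_t)^{q-1}$ and invokes the representation of Lemma \ref{LBSDE}, which is precisely the statement that your $M_t=pY_tx_t^{p-1}=p\eta_t\lvert\dot x_t\rvert^{p-1}$ is a true martingale with the deterministic integrating factor $x_t^{p-1}=e^{-(p-1)\int_0^t\alpha_r\,dr}$ pulled out of the conditional expectation. Your direct verification that $M$ is a genuine martingale on each $[0,t]$, $t<T$ (from $0\le x\le 1$ and $Z\in\mathcal M^2(0,t)$), is if anything slightly cleaner than appealing to Lemma \ref{LBSDE}, whose boundedness hypothesis on $\alpha$ is not literally checked in the paper.
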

\begin{proof}
 The optimal control from Theorem \ref{opt_strat_constr} satisfies $\dot x_t=-\left(\frac{Y_t}{\eta_t}\right)^{q-1}x_t$ where $Y$ is the minimal solution of \eqref{BSDE1} with singular terminal condition $Y_T=\infty$. Since $x$ is deterministic it follows that the nonnegative process $\alpha_t= \left(\frac{Y_t}{\eta_t}\right)^{q-1}$ is deterministic as well. Furthermore $Y$ satisfies the linear BSDE
\be
dY_t=(p-1) \alpha_tY_tdt+Z_tdW_t
\ee
and hence Lemma \ref{LBSDE} implies for $s\le t<T$
\be
\alpha_s^{p-1}\eta_s=Y_s=E\left[Y_te^{-\int_s^t(p-1) \alpha_r dr}|\mathcal F_s\right]=\alpha_t^{p-1}e^{-\int_s^t (p-1) \alpha_r dr}E\left[\eta_t|\mathcal F_s\right].
\ee
Consequently, the random variable $E\left[\frac {\eta_t}{\eta_s}\big| \mathcal F_s\right]$ is deterministic for all $s\le t<T$ and hence $\eta$ has uncorrelated multiplicative increments.
\end{proof}

We next show that the converse of Proposition \ref{nec_umi} holds true as well: If $\eta$ has uncorrelated multiplicative increments, then there exists an deterministic optimal control for \eqref{min_problem}.
\begin{propo}\label{ind_incr}
Assume that $\eta$ has uncorrelated multiplicative increments and satisfies the integrability assumptions $\bf{(I1)}$ and $\eta_T\in L^2(\Omega)$. Then
\be
Y_t=\frac{1}{\left(\int_t^T\frac{1}{E[\eta_s|\mathcal F_t]^{q-1}}ds\right)^{p-1}}
\ee
is the minimal solution to \eqref{BSDE1} with singular terminal condition. The deterministic control
\be
x_t=\frac1{\int_0^T \frac{1}{E[\eta_s]^{q-1}}ds}\int_t^T\frac{1}{E[\eta_s]^{q-1}}ds
\ee
is optimal in \eqref{min_problem}. In particular the optimal control rate is inversely proportional to $E[\eta_t]^{q-1}$.
\end{propo}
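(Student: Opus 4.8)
The plan is to exploit the martingale structure supplied by Lemma \ref{charac_umi} to rewrite the candidate $Y$ as a process that is \emph{linear} in a martingale, which makes the verification essentially free. Recall that $\gamma=0$ throughout this section and that the conjugacy $\frac1p+\frac1q=1$ gives the two identities $(p-1)(q-1)=1$ and $(1-p)q=-p$. By Lemma \ref{charac_umi} the process $N_t=\eta_t/E[\eta_t]$ is a martingale and satisfies $E[\eta_s\mid\mathcal F_t]=E[\eta_s]\,N_t$ for $s\ge t$; writing $\psi_t=\int_t^T E[\eta_s]^{-(q-1)}\,ds$ for the deterministic, $C^1$, strictly decreasing weight, this yields $\int_t^T E[\eta_s\mid\mathcal F_t]^{-(q-1)}\,ds=N_t^{-(q-1)}\psi_t$. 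Raising to the power $1-p$ and using $(q-1)(p-1)=1$, the candidate collapses to
\[
Y_t=N_t\,\psi_t^{\,1-p}.
\]
First I would record this reduction and note $Y_t>0$ for $t<T$, since $N_t>0$ and $\psi_t>0$.

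Next I would check that $(Y,Z)$ solves \eqref{BSDE1} in the sense of Definition \ref{def_sing_BSDE}. Because $g(t):=\psi_t^{1-p}$ is deterministic and of finite variation, Itô's product rule on $Y_t=N_tg(t)$ produces \emph{no} second-order term: with the Brownian representation $dN_t=\zeta_t\,dW_t$ one gets $dY_t=g'(t)N_t\,dt+g(t)\zeta_t\,dW_t$, so I set $Z_t=\psi_t^{1-p}\zeta_t$. A short computation, using $\dot\psi_t=-E[\eta_t]^{-(q-1)}$, $(1-p)q=-p$ and $N_t^{q-1}\eta_t^{-(q-1)}=E[\eta_t]^{-(q-1)}$, shows the drift $g'(t)N_t$ equals $(p-1)Y_t^q/\eta_t^{q-1}$, i.e.\ the driver of \eqref{BSDE1}. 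For the singular terminal condition I would compare with the lower bound \eqref{low_bound_Y}: conditional Jensen for the convex map $x\mapsto x^{-(q-1)}$ gives $N_t^{-(q-1)}\psi_t=\int_t^T E[\eta_s\mid\mathcal F_t]^{-(q-1)}ds\le E[\int_t^T\eta_s^{-(q-1)}ds\mid\mathcal F_t]$, and since $x\mapsto x^{1-p}$ is decreasing this gives $Y_t\ge (E[\int_t^T\eta_s^{-(q-1)}ds\mid\mathcal F_t])^{1-p}$, whose right-hand side already tends to $\infty$ as $t\nearrow T$ in the proof of Theorem \ref{existence_singular}. Condition (ii) of Definition \ref{def_sing_BSDE} on $[0,t]$, $t<T$, then follows from Doob's $L^2$-inequality for $N$ and the boundedness of $\psi^{1-p}$ on $[0,t]$.

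To identify $Y$ as the \emph{minimal} solution I would invoke Proposition \ref{suff_cond_minimality}. Computing the associated control gives $(Y_s/\eta_s)^{q-1}=Y_s^{q-1}\eta_s^{-(q-1)}=\psi_s^{-1}E[\eta_s]^{-(q-1)}=-\dot\psi_s/\psi_s$ (using $Y_s^{q-1}=N_s^{q-1}\psi_s^{-1}$ and the cancellation $N_s^{q-1}\eta_s^{-(q-1)}=E[\eta_s]^{-(q-1)}$), so $\int_0^t(Y_s/\eta_s)^{q-1}ds=\log(\psi_0/\psi_t)$ and hence $x_t=\psi_t/\psi_0$ — deterministic, with $x_0=1$, $x_T=0$, strictly decreasing. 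The hypothesis of Proposition \ref{suff_cond_minimality} then reduces to a clean cancellation, $x_t^{p-1}Z_t=\psi_0^{1-p}\psi_t^{p-1}\cdot\psi_t^{1-p}\zeta_t=\psi_0^{1-p}\zeta_t$, so that $x^{p-1}Z\in\mathcal M^2(0,T)$ precisely when $\zeta\in\mathcal M^2(0,T)$, i.e.\ when $N$ is square-integrable on all of $[0,T]$. This is where $\eta_T\in L^2$ enters, and it is the one genuinely technical point: the main obstacle I expect is to show that the martingale $N$ of Lemma \ref{charac_umi}, a priori defined only on $[0,T)$, closes at $T$ with $N_T=\eta_T/E[\eta_T]\in L^2$, so that $E[\langle N\rangle_T]=E[N_T^2]-N_0^2<\infty$; this amounts to propagating the relation $E[\eta_T\mid\mathcal F_t]=\frac{E[\eta_T]}{E[\eta_t]}\eta_t$ up to the endpoint, and the same fact also underwrites the integrability used in the previous paragraph.

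With minimality established, Theorem \ref{opt_strat_constr} applies directly to $Y$ and yields at once that $x_t=\psi_t/\psi_0$ is optimal in \eqref{min_problem} and that $v=Y_0|\xi|^p$; for a general initial position $\xi$ the optimal control is $\xi\,\psi_t/\psi_0$, by the homogeneity recorded in Remark \ref{general_inistate}. Finally, $\dot x_t=-\psi_0^{-1}E[\eta_t]^{-(q-1)}$ shows that the liquidation rate is inversely proportional to $E[\eta_t]^{q-1}$, as claimed.
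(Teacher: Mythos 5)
Your proof is correct and follows essentially the same route as the paper: factor the candidate as $Y_t=(\eta_t/E[\eta_t])\,\psi_t^{1-p}$ via Lemma \ref{charac_umi}, verify the BSDE by integration by parts, deduce minimality from the cancellation $x_t^{p-1}Z_t=\psi_0^{1-p}\zeta_t$ together with Proposition \ref{suff_cond_minimality}, and conclude with Theorem \ref{opt_strat_constr}. The only cosmetic differences are that you verify condition (ii) with Doob's inequality for the martingale $N$ where the paper uses a Jensen upper bound plus Burkholder--Davis--Gundy, and that you obtain the singular terminal condition from the lower bound \eqref{low_bound_Y} rather than directly from the factorization; both variants are fine, and your explicit remark that the martingale must be closed at $T$ using $\eta_T\in L^2(\Omega)$ is a point the paper passes over silently.
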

\begin{proof}
 First note that we have by Jensen's inequality
\be
\int_t^T\frac{1}{E[\eta_s|\mathcal F_t]^{q-1}}ds\ge (T-t)^q\frac 1{\left(\int_t^TE[\eta_s|\mathcal F_t]ds\right)^{q-1}}.
\ee
This implies that $Y$ is bounded from above as follows
\ben\label{upboundY2}
Y_t\le \frac 1{(T-t)^p}E\left[\int_t^T\eta_sds|\mathcal F_t\right].
\een
Next we use the fact from Lemma \ref{charac_umi} that $E[\eta_s|\mathcal F_t]=\eta_tE\left[\frac{\eta_s}{\eta_t}\right]=\eta_t\frac{E[\eta_s]}{E[\eta_t]}$ for $s\ge t$ to rewrite $Y$ as
\be
Y_t=M_t\frac{1}{\left(\int_t^T\frac{1}{E[\eta_s]^{q-1}}ds\right)^{p-1}}
\ee
where the process $M$ denotes the martingale $M_t=\frac{\eta_t}{E[\eta_t]}$. Moreover, we have by assumption $E[M_T^2]=E[\eta_T^2]/E[\eta_T]^2<\infty$. Hence, $M$ is a square integrable martingale. Let $\phi\in\mathcal M^2(0,T)$ denote the integrand from its martingale representation. Then we obtain, by integration by parts,
\be
dY_t&=&(p-1)\frac{1}{E[\eta_t]^{q-1}}\frac{M_t}{\left(\int_t^T\frac 1{E[\eta_s]^{q-1}}ds\right)^{p}}dt+\frac{\phi_t}{\left(\int_t^T\frac{1}{E[\eta_s]^{q-1}}ds\right)^{p-1}}dW_t \\
&=&(p-1)\frac {Y^q_t}{\eta_t^{q-1}} d t+Z_t dW_t,
\ee
with
\ben\label{Zuncorincr}
Z_t=\frac{\phi_t}{\left(\int_t^T\frac{1}{E[\eta_s]^{q-1}}ds\right)^{p-1}}.
\een
Hence, we have $Z\in \mathcal M^2(0,t)$ for every $t<T$. An application of the Burkholder-Davis-Gundy inequality as in the proof of Theorem \ref{existence_singular} in combination with Inequality \eqref{upboundY2} yields $E[\sup_{0\le s\le t}Y_s^2]<\infty$ for all $t<T$. Hence, $(Y,Z)$ is a solution to \eqref{BSDE1} with singular terminal condition $Y_T=\infty$.

The associated path $x$ satisfies
\be
x_t=\exp\left(-\int_0^t\left(\frac{Y_s}{\eta_s}\right)^{q-1} ds\right)&=&\exp\left(-\int_0^t\frac{1}{E[\eta_s]^{q-1}\int_t^T\frac 1{E[\eta_r]^{q-1}}dr}ds\right)\\
&=&\frac1{\int_0^T \frac{1}{E[\eta_s]^{q-1}}ds}\int_t^T\frac{1}{E[\eta_s]^{q-1}}ds.
\ee
In particular it follows from \eqref{Zuncorincr} that $x^{p-1}Z\in \mathcal M^2(0,T)$ and hence Proposition \ref{suff_cond_minimality} yields that $Y$ is the minimal solution of \eqref{BSDE1}. Theorem \ref{opt_strat_constr} then implies optimality of $x$.
\end{proof}
If $\eta$ is monotone in expectation, then we obtain the following result about the path of the optimal control.
\begin{corollary}
 Let $\eta$ satisfy the assumptions of Proposition \ref{ind_incr}. If the mapping $t\mapsto E[\eta_t]$ is nondecreasing (nonincreasing), then the optimal control $x\in \mathcal A_0$ from Proposition \ref{ind_incr} is a convex (concave) function of time.
\end{corollary}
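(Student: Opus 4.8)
The plan is to read the optimal control off Proposition~\ref{ind_incr} and differentiate it once, which reduces the claim to an elementary monotonicity statement about a single deterministic function of time. By Proposition~\ref{ind_incr} the optimal control has the explicit form
\[
x_t=\frac1{C}\int_t^T\frac{1}{E[\eta_s]^{q-1}}\,ds,\qquad C:=\int_0^T\frac{1}{E[\eta_s]^{q-1}}\,ds,
\]
where $C\in(0,\infty)$ is a fixed normalising constant. Thus $x$ is an absolutely continuous function of time, and since $q>1$ its derivative is
\[
\dot x_t=-\frac1{C}\,\frac{1}{E[\eta_t]^{q-1}}=-\frac1{C}\big(E[\eta_t]\big)^{-(q-1)}.
\]

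First I would observe that the map $u\mapsto u^{-(q-1)}$ is strictly decreasing on $(0,\infty)$, because $q-1>0$. Hence if $t\mapsto E[\eta_t]$ is nondecreasing, then $t\mapsto\big(E[\eta_t]\big)^{-(q-1)}$ is nonincreasing, so that $t\mapsto\dot x_t=-\frac1{C}\big(E[\eta_t]\big)^{-(q-1)}$ is nondecreasing. An absolutely continuous function with a nondecreasing derivative is convex, which gives the first claim. Symmetrically, if $t\mapsto E[\eta_t]$ is nonincreasing, then $\dot x_t$ is nonincreasing and $x$ is concave.

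The computation is routine; the only point deserving care is the passage from ``the derivative $\dot x$ is monotone'' to ``$x$ is convex (resp.\ concave)''. Since $x_t=\frac1{C}\int_t^T g(s)\,ds$ with $g(s)=\big(E[\eta_s]\big)^{-(q-1)}\ge 0$, the function $x$ is an antiderivative of the sign-changed monotone function $-\frac1{C}g$, and I would invoke the standard fact that a function on an interval is convex if and only if it is absolutely continuous with an almost-everywhere nondecreasing derivative. This sidesteps any need for $t\mapsto E[\eta_t]$ to be continuous and uses only its monotonicity. I do not anticipate any genuine obstacle: the whole statement collapses to the sign of the derivative of $u\mapsto u^{-(q-1)}$ together with the explicit form of the optimal control already established in Proposition~\ref{ind_incr}.
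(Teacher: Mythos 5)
Your proof is correct and follows essentially the same route as the paper: differentiate the explicit formula for the optimal control from Proposition~\ref{ind_incr} to get $\dot x_t=-\frac1{C}E[\eta_t]^{-(q-1)}$ and conclude from the monotonicity of $u\mapsto u^{-(q-1)}$. Your extra care about the characterisation of convexity for absolutely continuous functions is a welcome refinement but not a different argument.
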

\begin{proof}
 The optimal control rate from Proposition \ref{ind_incr} is given by $\dot x_t=-\frac{1}{cE[\eta_t]^{q-1}}$ with $c=\int_0^T \frac{1}{E[\eta_s]^{q-1}}ds$. In particular $t\mapsto \dot x_t$ is nondecreasing (nonincreasing) if $t\mapsto E[\eta_t]$ is nondecreasing (nonincreasing).
\end{proof}
Proposition \ref{ind_incr} includes the case where $\eta$ is a martingale as a special case.
\begin{corollary}\label{martingale_case}
 Let $\eta$ be a positive martingale satisfying $1/\eta^{q-1}\in \mathcal M^1(0,T)$ and $\eta_T \in L^2(\Omega)$. Then $Y_t=\frac {\eta_t}{(T-t)^{p-1}}$ solves the BSDE \eqref{BSDE1} with singular terminal condition $Y_T=\infty$ and the control with constant control rate $x_t=1-\frac tT$ is optimal in \eqref{min_problem}.
\end{corollary}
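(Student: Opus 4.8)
The plan is to recognize this as a direct specialization of Proposition \ref{ind_incr}. The key observation is that a positive martingale is a particular instance of a process with uncorrelated multiplicative increments, so the explicit formulas derived there apply verbatim; what remains is to substitute the martingale property into them and simplify.

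First I would verify the hypotheses of Proposition \ref{ind_incr}. By the remark following Lemma \ref{charac_umi}, any positive martingale has uncorrelated multiplicative increments, so that structural assumption holds. The condition $\eta_T \in L^2(\Omega)$ is given outright, and $1/\eta^{q-1} \in \mathcal M^1(0,T)$ is assumed; it remains to check $\eta \in \mathcal M^2(0,T)$, which is the missing half of $\bf{(I1)}$. Since $\eta$ is a positive martingale with $\eta_T \in L^2$, we have $\eta_t = E[\eta_T \mid \mathcal F_t]$, and Jensen's inequality yields $E[\eta_t^2] \le E[\eta_T^2]$ for every $t$; integrating over $[0,T]$ gives $E\int_0^T \eta_t^2\, dt \le T\, E[\eta_T^2] < \infty$. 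Thus all hypotheses of Proposition \ref{ind_incr} are met (recall $\gamma = 0$ throughout this section, so $\bf{(I2)}$ holds trivially).

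Next I would substitute. Because $\eta$ is a martingale, $E[\eta_s \mid \mathcal F_t] = \eta_t$ for $s \ge t$, so the conditional integral in the formula for $Y_t$ collapses to $\int_t^T \eta_t^{-(q-1)}\, ds = (T-t)\,\eta_t^{-(q-1)}$, whence
\[
Y_t = \frac{1}{\left((T-t)\,\eta_t^{-(q-1)}\right)^{p-1}} = \frac{\eta_t^{(q-1)(p-1)}}{(T-t)^{p-1}}.
\]
The crux of the computation is the exponent identity: since $q = p/(p-1)$ one has $q-1 = 1/(p-1)$, hence $(q-1)(p-1) = 1$, giving exactly $Y_t = \eta_t/(T-t)^{p-1}$. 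Likewise, a martingale has constant expectation $E[\eta_s] \equiv E[\eta_0]$, so in the formula for the optimal control the factor $E[\eta_s]^{-(q-1)}$ is constant in $s$ and cancels between numerator and denominator, leaving $x_t = (T-t)/T = 1 - t/T$.

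I do not anticipate a genuine obstacle here; the statement is a clean corollary. The only points requiring care are the exponent bookkeeping $(q-1)(p-1)=1$ and confirming the square-integrability of $\eta$ from the $L^2$ terminal condition. Optimality of $x$ in \eqref{min_problem} and minimality of the solution $Y$ to \eqref{BSDE1} are then inherited directly from Proposition \ref{ind_incr} (equivalently, from Theorem \ref{opt_strat_constr}).
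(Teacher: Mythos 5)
Your proof is correct and follows essentially the same route as the paper: reduce to Proposition \ref{ind_incr} by noting that a positive martingale has uncorrelated multiplicative increments (Lemma \ref{charac_umi}) and that $\eta_T\in L^2(\Omega)$ gives $E[\eta_t^2]\le E[\eta_T^2]$ and hence $\eta\in\mathcal M^2(0,T)$ (the paper phrases this via the submartingale property of $\eta^2$, you via conditional Jensen --- the same fact). The explicit substitution $E[\eta_s\mid\mathcal F_t]=\eta_t$ and the exponent identity $(q-1)(p-1)=1$, which the paper leaves implicit, are carried out correctly.
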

\begin{proof}
 The process $\eta^2$ is a submartingale and hence $E[\eta_t^2]\le E[\eta_T^2]$ for all $t\le T$, which implies that $\eta \in \mathcal M^2(0,T)$. Moreover, Lemma \ref{charac_umi} yields that $\eta$ has uncorrelated multiplicative increments. Hence, all assumptions of Proposition \ref{ind_incr} are satisfied which yields the claim.
\end{proof}
Another special case of Proposition \ref{ind_incr} is the case where $\eta$ is a deterministic function of time.
\begin{corollary}\label{det_solution}
Assume that $\eta$ is deterministic and satisfies $1/\eta^{q-1} \in L^1([0,T])$, $\eta \in L^2([0,T])$ and $\eta_T<\infty$. Then
\be
Y_t=\left(\frac 1{\int_t^T\frac 1{\eta_s^{q-1}}ds}\right)^{p-1}
\ee
solves \eqref{BSDE1} with singular terminal condition $Y_T=\infty$ and the control
\ben\label{opt_strat_det}
x_t=\frac {\int_t^T\frac 1{\eta_s^{q-1}}ds}{\int_0^T\frac 1{\eta_s^{q-1}}ds}
\een
is optimal in \eqref{min_problem}.
\end{corollary}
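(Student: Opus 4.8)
The plan is to recognize this statement as the deterministic specialization of Proposition \ref{ind_incr} and to obtain it simply by checking that a deterministic $\eta$ falls within the scope of that proposition. First I would observe that any deterministic $\eta$ has uncorrelated multiplicative increments: for $s\le t<T$ the ratio $\eta_t/\eta_s$ is a deterministic number, hence $\mathcal F_s$-measurable and constant, so that $E[\eta_t/\eta_s\mid\mathcal F_s]=\eta_t/\eta_s=E[\eta_t/\eta_s]$, which is exactly the defining property of uncorrelated multiplicative increments stated before Lemma \ref{charac_umi}.

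Second, I would verify the integrability hypotheses of Proposition \ref{ind_incr}, namely ${\bf (I1)}$ and $\eta_T\in L^2(\Omega)$. For a deterministic integrand the spaces $\mathcal M^i(0,T)=L^i(\Omega\times[0,T],\mathcal P,P\otimes\lambda)$ reduce, after the trivial integration over $\Omega$, to $L^i([0,T],\lambda)$; indeed a deterministic process is automatically progressively measurable and its $\mathcal M^i$-norm coincides with its $L^i([0,T])$-norm. Thus the hypothesis $\eta\in L^2([0,T])$ yields $\eta\in\mathcal M^2(0,T)$, the hypothesis $1/\eta^{q-1}\in L^1([0,T])$ yields $1/\eta^{q-1}\in\mathcal M^1(0,T)$, and these two together are precisely ${\bf (I1)}$. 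Finally $\eta_T<\infty$ means $\eta_T$ is a finite constant, so trivially $\eta_T\in L^2(\Omega)$.

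Third, I would invoke Proposition \ref{ind_incr} and simplify its output by using that all conditional expectations of a deterministic process are trivial, i.e.\ $E[\eta_s\mid\mathcal F_t]=\eta_s$ and $E[\eta_s]=\eta_s$. Substituting into the formula for the minimal solution from Proposition \ref{ind_incr} collapses it to
\[
Y_t=\left(\frac{1}{\int_t^T\frac{1}{\eta_s^{q-1}}ds}\right)^{p-1},
\]
and substituting into the formula for the optimal control collapses it to
\[
x_t=\frac{\int_t^T\frac{1}{\eta_s^{q-1}}ds}{\int_0^T\frac{1}{\eta_s^{q-1}}ds},
\]
which are exactly the asserted expressions. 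By Proposition \ref{ind_incr} this $Y$ is then the minimal solution of \eqref{BSDE1} with singular terminal condition $Y_T=\infty$, and $x$ is optimal in \eqref{min_problem}. I do not anticipate any genuine obstacle: the whole content is a direct corollary, and the only point deserving a moment's care is the identification of the progressively measurable spaces $\mathcal M^i(0,T)$ with the ordinary Lebesgue spaces $L^i([0,T])$ in the deterministic case, which is routine.
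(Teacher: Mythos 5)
Your proposal is correct and coincides with the paper's own (implicit) argument: the paper states Corollary \ref{det_solution} as a direct special case of Proposition \ref{ind_incr}, and your verification that a deterministic $\eta$ has uncorrelated multiplicative increments, satisfies $\bf{(I1)}$ and $\eta_T\in L^2(\Omega)$, and that the conditional expectations collapse to give the stated formulas is exactly what is needed.
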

\begin{remark}
 The results about the optimal control in Corollary \ref{martingale_case} and Corollary \ref{det_solution} hold also true under weaker assumptions on the process $\eta$. In the martingale case it suffices to assume that $\eta$ is a positive martingale with $E[\eta_T^2]<\infty$. Then Proposition \ref{suff_cond} directly implies that the control with constant rate is optimal. In the deterministic case it is straightforward to show that under the integrability condition $1/\eta^{q-1} \in L^1([0,T])$ the function $\eta|\dot x|^{p-1}$ is constant for the control $x$ from Equation \eqref{opt_strat_det}. Then again Proposition \ref{suff_cond} implies optimality of $x$.
\end{remark}
A particular example for a process with uncorrelated multiplicative increments is the geometric Brownian motion.
\begin{ex}
 Assume that $\eta$ evolves according to a geometric Brownian motion
\be
d\eta_t=\mu \eta_tdt+\sigma \eta_tdW_t
\ee
with drift $\mu\in \IR$, volatility $\sigma>0$ and initial value $\eta_0>0$. In this case
\be
\frac {\eta_t}{\eta_s}=e^{\left(\mu-\frac{\sigma^2}{2}\right)(t-s)+\sigma(W_t-W_s)}
\ee
for $s\le t\le T$ and hence $\eta$ has uncorrelated multiplicative increments. Moreover we have $E[\eta_t|\mathcal F_s]=\eta_se^{\mu(t-s)}$ and $\eta$ satisfies the integrability conditions $\eta \in \mathcal M^2(0,T)$, $E[\eta_T^2<\infty]$ and $\int_t^T\frac{1}{E[\eta_s]^{q-1}}ds<\infty$. In the case $\mu=0$ the price impact process $\eta$ is a martingale and Corollary \ref{martingale_case} yields that linear closure is optimal in \eqref{min_problem}. In the case $\mu \neq 0$ Proposition \ref{ind_incr} implies that a solution of \eqref{BSDE1} is given by
\be
Y_t=\mu(q-1)^{p-1}\frac{\eta_t}{\left(1-e^{-\mu(q-1)(T-t)}\right)^{p-1}}
\ee
and that the optimal control for \eqref{min_problem} satisfies
\be
x_t=\frac{e^{-\mu(q-1)t}-e^{-\mu(q-1)T}}{1-e^{-\mu(q-1)T}}.
\ee
\end{ex}

\section*{Appendix}
Here we provide a uniqueness result about linear BSDEs with a driver that is unbounded from below.
\begin{lemma}\label{LBSDE}
 Let $(\alpha_t)_{0\le t\le T}$ and $(\beta_t)_{0\le t\le T}$ be progressively measurable processes and $\xi$ a $\mathcal F_T$-measurable random variable. Assume that $\alpha$ is bounded from above. Any solution $(Y,Z)$ with $Z\in \mathcal M^2(0,T)$ to the linear BSDE
\be
dY_t=\left(\alpha_tY_t+\beta_t\right)dt+Z_tdW_t
\ee
with $Y_T=\xi$ admits the representation
\be
Y_t=E\left[\xi e^{\int_t^T\alpha_s ds}+\int_t^Te^{\int_t^s\alpha_u du}\beta_sds|\mathcal F_t\right].
\ee
\end{lemma}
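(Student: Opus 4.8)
The plan is to prove this by the classical integrating-factor (variation-of-constants) argument for linear BSDEs, which simultaneously delivers uniqueness of the representation. First I would introduce the strictly positive, finite-variation integrating factor $\Gamma_t=\exp\left(\int_0^t\alpha_s\,ds\right)$, chosen precisely so that multiplying the equation by $\Gamma_t$ cancels the term that is linear in $Y$. Since $\Gamma$ is absolutely continuous with $d\Gamma_t=\alpha_t\Gamma_t\,dt$ and carries no quadratic variation, the ordinary product rule applies to $\Gamma_tY_t$; the $\alpha_tY_t$ contributions cancel and one is left with $d(\Gamma_tY_t)=\Gamma_tZ_t\,dW_t-\Gamma_t\beta_t\,dt$. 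Equivalently, the process $N_t:=\Gamma_tY_t+\int_0^t\Gamma_s\beta_s\,ds$ has vanishing $dt$-drift, so that $dN_t=\Gamma_tZ_t\,dW_t$ and $N$ is a local martingale on $[0,T]$.

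The main obstacle, and the one point where the hypothesis on $\alpha$ is genuinely used, is upgrading this local martingale to a true martingale so that $E[N_T\mid\mathcal{F}_t]=N_t$. Here I would invoke that $\alpha$ is bounded from above: if $\alpha_s\le C$ then $\Gamma_t=\exp\left(\int_0^t\alpha_s\,ds\right)\le e^{CT}$ is uniformly bounded on $[0,T]$. Consequently $\Gamma Z\in\mathcal{M}^2(0,T)$, using the standing assumption $Z\in\mathcal{M}^2(0,T)$, and hence $\int_0^\cdot\Gamma_sZ_s\,dW_s$ is a genuine square-integrable martingale whose increments have vanishing conditional expectation. Without the upper bound on $\alpha$ the factor $\Gamma$ could explode and $\Gamma Z$ would only be a local-martingale integrand, so this boundedness is exactly what legitimises the conditional-expectation step.

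Finally I would take $E[\,\cdot\mid\mathcal{F}_t]$ in $N_t=E[N_T\mid\mathcal{F}_t]$ and substitute $Y_T=\xi$ to obtain $\Gamma_tY_t+\int_0^t\Gamma_s\beta_s\,ds=E\left[\Gamma_T\xi+\int_0^T\Gamma_s\beta_s\,ds\,\middle|\,\mathcal{F}_t\right]$. Moving the $\mathcal{F}_t$-measurable term $\int_0^t\Gamma_s\beta_s\,ds$ to the right collapses $\int_0^T-\int_0^t$ into $\int_t^T$, and dividing by the strictly positive, $\mathcal{F}_t$-measurable quantity $\Gamma_t$, together with $\Gamma_T/\Gamma_t=\exp\left(\int_t^T\alpha_s\,ds\right)$ and $\Gamma_s/\Gamma_t=\exp\left(\int_t^s\alpha_u\,du\right)$, reproduces the claimed representation. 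Because the resulting right-hand side is determined entirely by the data $(\alpha,\beta,\xi)$, the very same computation applied to the difference of two solutions (each with $Z\in\mathcal{M}^2$) forces them to coincide, yielding the asserted uniqueness.
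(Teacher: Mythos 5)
Your argument is essentially the paper's own proof: the paper introduces $\varphi_t=Y_te^{\int_0^t\alpha_s\,ds}+\int_0^te^{\int_0^s\alpha_u\,du}\beta_s\,ds$ (your $N_t$), computes $d\varphi_t=e^{\int_0^t\alpha_s\,ds}Z_t\,dW_t$, uses the upper bound on $\alpha$ together with $Z\in\mathcal M^2(0,T)$ to promote $\varphi$ from a local to a true martingale, and conditions on $\mathcal F_t$; you correctly isolate the boundedness of $\alpha$ as the one hypothesis doing real work. One caveat, which your write-up shares with the paper: for the equation literally as displayed, $dY_t=(\alpha_tY_t+\beta_t)\,dt+Z_t\,dW_t$, the factor $\Gamma_t=e^{\int_0^t\alpha_s\,ds}$ does \emph{not} cancel the linear term --- one gets $d(\Gamma_tY_t)=\Gamma_t(2\alpha_tY_t+\beta_t)\,dt+\Gamma_tZ_t\,dW_t$ --- so your claimed cancellation and the stated representation are valid only under the convention $dY_t=-(\alpha_tY_t+\beta_t)\,dt+Z_t\,dW_t$. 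With the sign as written one must take $\Gamma_t=e^{-\int_0^t\alpha_s\,ds}$, the representation acquires factors $e^{-\int_t^s\alpha_u\,du}$ and a minus sign on $\beta$, and the natural hypothesis becomes that $\alpha$ is bounded from \emph{below}; this is in fact the form in which the lemma is applied in the proof of Proposition \ref{minimality}, where $\alpha\ge0$ and the exponents appear with a minus sign. Since this is a defect of the statement rather than of your reasoning, it does not count against you, but the intermediate identity you display should not be relied upon without first fixing the sign convention.
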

\begin{proof}
Let $(Y,Z)$ be a solution. Set
\be
\varphi_t=Y_t e^{\int_0^t\alpha_s ds}+\int_0^te^{\int_0^s\alpha_u du}\beta_sds.
\ee
 Then by integration by parts we obtain
\be
d\varphi_t=e^{\int_0^t\alpha_s ds}Z_tdW_t.
\ee
Since $\alpha$ is bounded from above and $Z\in \mathcal M^2(0,T)$ the integrand belongs to $\mathcal M^2(0,T)$ as well. Therefore $\varphi$ is a martingale and consequently
\be
\varphi_t=E[\varphi_T|\mathcal F_t]=E\left[\xi e^{\int_0^T\alpha_s ds}+\int_0^Te^{\int_0^s\alpha_u du}\beta_sds|\mathcal F_t\right],
\ee
which yields the claim.
\end{proof}

\bibliographystyle{plain}
\bibliography{opti_liqui}

\begin{thebibliography}{10}

\bibitem{ak11}
S.~Ankirchner and T.~Kruse.
\newblock {Optimal Trade Execution Under Price-Sensitive Risk Preferences}.
\newblock {\em SSRN eLibrary}, 2011.

\bibitem{epq97}
N.~El~Karoui, S.~Peng, and M.~C. Quenez.
\newblock Backward stochastic differential equations in finance.
\newblock {\em Math. Finance}, 7(1):1--71, 1997.

\bibitem{jixyz}
S.~Ji and X.~Y. Zhou.
\newblock A maximum principle for stochastic optimal control with terminal
  state constraints, and its applications.
\newblock {\em Commun. Inf. Syst.}, 6(4):321--337, 2006.

\bibitem{kobylanski2000backward}
M.~Kobylanski.
\newblock Backward stochastic differential equations and partial differential
  equations with quadratic growth.
\newblock {\em Annals of Probability}, 28:558--602, 2000.

\bibitem{horstnau}
F.~Naujokat and U.~Horst.
\newblock When to cross the spread? {T}rading in two-sided limit order books.
\newblock In {\em Sonderforschungsbereich 649: {\"O}konomisches Risiko (SFB 649
  Papers)}. Humboldt-Universit{\"a}t zu Berlin, Wirtschaftswissenschaftliche
  Fakult{\"a}t.

\bibitem{naujokat2011curve}
F.~Naujokat and N.~Westray.
\newblock Curve following in illiquid markets.
\newblock {\em Mathematics and Financial Economics}, 4(4):299--335, 2011.

\bibitem{pardoux1999bsdes}
{\'E}.~Pardoux.
\newblock {BSDE}s, weak convergence and homogenization of semilinear {PDE}s.
\newblock {\em NATO ASI Series C Mathematical and Physical Sciences-Advanced
  Study Institute}, 528:503--550, 1999.

\bibitem{pham}
H.~Pham.
\newblock {\em Continuous-time Stochastic Control and Optimization with
  Financial Applications (Stochastic Modelling and Applied Probability)}.
\newblock Springer, 1 edition, 7 2009.

\bibitem{popier2006}
A.~Popier.
\newblock Backward stochastic differential equations with singular terminal
  condition.
\newblock {\em Stochastic processes and their applications},
  116(12):2014--2056, 2006.

\bibitem{popier2007}
A.~Popier.
\newblock Backward stochastic differential equations with random stopping time
  and singular final condition.
\newblock {\em Ann. Probab.}, 35(3):1071--1117, 2007.

\bibitem{schied2012}
A.~Schied.
\newblock A control problem with fuel constraint and {D}awson-{W}atanabe
  superprocesses.
\newblock {\em arXiv preprint arXiv:1207.5809}, 2012.

\end{thebibliography}

\end{document}